\title{Implementation of Interior-point Methods for LP\\based on Krylov Subspace Iterative Solvers\\with Inner-iteration Preconditioning}
\author{Yiran Cui\thanks{Department of Computer Science, University College London, Gower Street, London WC1E 6BT, United Kingdom. \texttt{y.cui.12@ucl.ac.uk}} \and Keiichi Morikuni\thanks{Division of Information Engineering, University of Tsukuba, Tenoudai 1-1-1, Tsukuba, Ibaraki 305-8573, Japan. The author was supported in part by JSPS KAKENHI Grant Number~16K17639.  \texttt{morikuni@cs.tsukuba.ac.jp}} \and Takashi Tsuchiya\thanks{National Graduate Institute for Policy Studies, 7-22-1 Roppongi, Minato, Tokyo 106-8677, Japan. The author was supported in part by JSPS KAKENHI Grant Number 15H02968. \texttt{tsuchiya@grips.ac.jp}} \and Ken Hayami\thanks{National Institute of Informatics, SOKENDAI (The Graduate University for Advanced Studies), 2-1-2 Hitotsubashi, Chiyoda, Tokyo 101-0003, Japan. The author was supported in part by JSPS KAKENHI Grant Number 15K04768 and 15H02968. \texttt{hayami@nii.ac.jp}}}
\date{}
\let\cl@part\relax \makeatother
\DeclarePairedDelimiter\floor{\lfloor}{\rfloor}
\theoremstyle{plain}
\newtheorem{theorem}{Theorem}[section]
\newtheorem{lemma}[theorem]{Lemma}
\newtheorem{remark}[theorem]{Remark}
\renewcommand{\arraystretch}{0.94}
\renewcommand{\tabcolsep}{0.8mm}
\newfont{\bg}{cmr9 scaled\magstep4}
\DeclareMathOperator{\bigzerol}{\lower1.0ex\hbox{\bg 0}}
\begin{document}
\maketitle

\begin{abstract}
We apply novel inner-iteration preconditioned Krylov subspace \break methods to the interior-point algorithm for linear programming (LP). Inner-iteration preconditioners recently proposed by Morikuni and Hayami enable us to overcome the severe ill-conditioning of linear equations solved in the final phase of interior-point iterations. The Krylov subspace methods do not suffer from rank-deficiency and therefore no preprocessing is necessary even if rows of the constraint matrix are not linearly independent. 
By means of these methods, a new interior-point recurrence is proposed in order to omit one matrix-vector product at each step.
Extensive numerical experiments are conducted over diverse instances of 140 LP problems including the Netlib, QAPLIB, Mittelmann and Atomizer Basis Pursuit collections.
The largest problem has 434,580 unknowns.
It turns out that our implementation is more robust than the standard public domain solvers SeDuMi (Self-Dual Minimization), SDPT3 (Semidefinite Programming Toh-Todd-T\"{u}t\"{u}nc\"{u}) and the LSMR iterative solver in PDCO (Primal-Dual Barrier Method for Convex Objectives) without increasing CPU time.
The proposed interior-point method based on iterative solvers succeeds in solving a fairly large number of LP instances from benchmark libraries under the standard stopping criteria.
The work also presents a fairly extensive benchmark test for several renowned solvers including direct and iterative solvers.
\end{abstract}

\section{Introduction}
Consider the linear programming (LP) problem in the standard primal-dual formulation
\begin{subequations}\label{eq:PrimalDualPair}
\begin{align}
\min_{\boldsymbol{x}} \boldsymbol{c}^\mathsf{T}\boldsymbol{x} \qquad & \mbox{subject to} \qquad A\boldsymbol{x}=\boldsymbol{b}, \;\; \boldsymbol{x}\geq \boldsymbol{0},\\
\max_{\boldsymbol{y},\boldsymbol{s}} \boldsymbol{b}^\mathsf{T}\boldsymbol{y} \qquad &\mbox{subject to} \qquad  A^\mathsf{T}\boldsymbol{y} + \boldsymbol{s}=\boldsymbol{c},\;\; \boldsymbol{s}\geq \boldsymbol{0},
\end{align}
\end{subequations}
where $A \in \mathbb{R}^{m\times n}$, $m\leq n$, and we assume the existence of an optimal solution.
In this paper, we describe an implementation of the interior-point method for LP based on iterative solvers.  
The main computational task in one iteration of the interior-point method is the solution of a system of linear equations to compute the search direction. 

For this task, direct solvers are usually used. 
But some solvers also employ iterative solvers.
Iterative solvers are advantageous when the systems are large and sparse, or even when they are large and dense but the product of the coefficient matrix and a vector can be approximated cheaply, as in \cite{ChenDonohoSaunders1998,saunders2002pdco}. The difficulty with iterative solvers is that the linear system becomes notoriously ill-conditioned towards the end of interior-point iterations. 
One approach is to precondition the mathematically equivalent indefinite augmented system (as in equation \eqref{eq:SE2}) as in HOPDM (Higher Order Primal-Dual Method) \cite{gondzio1995hopdm} and also \cite{ChinVannelli1994,FreundJarre1997,FreundJarreMizuno1999,BergamaschiGondzioZilli2004,OliveiraSorensen2005,BergamaschiGondzioVenturinZilli,rees2007preconditioner,al2008preconditioning,al2009convergence,Gondzio2012a}.
The other approach is to precondition the equivalent normal equations (as in equation \eqref{eq:SE3})
\cite{GillMurraySaundersTomlinWright1986,KarmarkarRamakrishnan1991,Mehrotra1992b,CarpenterShanno1993,LustigMarstenShanno1994,MehrotraWang1996,PortugalResendeVeigaJudice2000,Korzak2000,wang2000adaptive,Cui2009th}.

In this paper, we treat the normal equations and apply novel inner-iteration preconditioned Krylov subspace methods to them.
The inner-iteration preconditioners recently proposed by Morikuni and Hayami \cite{MorikuniHayami2013,MorikuniHayami2015} enable us to deal with the severe ill-conditioning of the normal equations. 
Furthermore, the proposed Krylov subspace methods do not suffer from singularity and therefore no preprocessing is necessary even if $A$ is rank-deficient. 

The main contribution of the present paper is that we actually show that the use of the inner-iteration preconditioner enables the efficient interior-point solution of wide-ranging LP problems. We further proposed combining the row-scaling scheme with the inner-outer iteration methods, where the row norm appears in the successive overrelaxation (SOR) inner-iterations, to improve the condition of the system at each interior-point step. The linear systems are solved with a gradually tightened stopping tolerance. We proposed a new recurrence in order to omit one matrix-vector product at each interior-point step. These techniques reduce the CPU time.

Extensive numerical experiments were conducted over diverse instances of 127 LP problems taken from the standard benchmark libraries Netlib, QAPLIB, and Mittelmann collections. 
The largest problem has 434,580 unknowns. 
The proposed interior-point method is entirely based on iterative solvers and yet succeeds in solving a fairly large number of standard LP instances from the benchmark libraries with standard stopping criteria.
We could not find any other analogous result where this level of LP instances were solved just relying on iterative solvers.

We compared our interior-point LP solvers based on AB-GMRES (right-preconditioned generalized minimal residual method) \cite{HayamiYinIto2010,MorikuniHayami2015}, CGNE, and MRNE (preconditioned CG and MINRES applied to the normal equations of the second kind) \cite{Craig1955,MorikuniHayami2015} with the following well-known interior-point LP solvers:
\begin{enumerate}
\item SeDuMi (Self-Dual Minimization) \cite{Sturm1999}, (public-domain, direct solver),
\item SDPT3 (Semidefinite Programming Toh-Todd-T\"{u}t\"{u}nc\"{u}) \cite{TohToddTutuncu1999,TutuncuTohTodd2003}, (public-domain, direct solver),
%\item PDCO (Primal-Dual Barrier Method for Convex Objectives) \cite{saunders2002pdco},
\begin{enumerate}
	\renewcommand\labelenumi{\theenumi}
\item PDCO-Direct (public-domain, direct solver),
\item PDCO-LSMR (public-domain, LSMR iterative solver),
\end{enumerate}
\item MOSEK \cite{mosek} (commercial, direct solver).
\end{enumerate}

SeDuMi and SDPT3 are solvers for conic linear programming including semidefinite programming (SDP) and second-order cone programming (SOCP). PDCO is for LP and convex quadratic programming (QP) and has options to solve the system of linear equations with Krylov subspace iterative method LSMR in addition to the direct method.  MOSEK is considered as one of the state-of-the-art solvers for LP.  

As summarized in Table \ref{tab:overall}, our implementation was able to solve most
instances, which is clearly superior to SeDuMi, SDPT3, PDCO-Direct, and PDCO-LSMR 
with comparable computation time, though it is still slower than MOSEK.

We also tested our solvers on different problems which arise in basis pursuit \cite{ChenDonohoSaunders1998} where the coefficient matrix is much denser than the aforementioned standard benchmark problems. 	

We emphasize that there are many interesting topics to be further worked out based on this paper.
There is still room for improvement regarding the iterative solvers as well as using more sophisticated methods for the interior-point iterations.

In the following, we introduce the interior-point method and review the iterative solvers previously used.
We employ an infeasible primal-dual predictor-corrector interior-point method, one of the methods that evolved from the original primal-dual interior-point method 
\cite{Tanabe1988,KojimaMizunoYoshise1989,MonteiroAdler1989,Wright1997} incorporating several innovative ideas, e.g.,~\cite{Zhang1994,Mehrotra1992b}. 
 
An optimal solution $\boldsymbol{x}, \boldsymbol{y}, \boldsymbol{s}$ to problem \eqref{eq:PrimalDualPair} must satisfy the Karush-Kuhn-Tucker (KKT) conditions
\begin{subequations}
\label{eq:KKT}
\begin{align}
	A^\mathsf{T} \boldsymbol{y} + \boldsymbol{s} & = \boldsymbol{c}, \label{eq:KKT1} \\	
	A \boldsymbol{x} & = \boldsymbol{b}, \label{eq:KKT2} \\
	X S \boldsymbol{e} & = \boldsymbol{0}, \label{eq:KKT3} \\
	\boldsymbol{x} \geq\boldsymbol{0}, \quad \boldsymbol{s} & \geq \boldsymbol{0}, \label{eq:KKT4}
\end{align}
\end{subequations}
where $X :=\mathrm{diag}(x_1, x_2, \dots, x_n)$, $S:=\mathrm{diag}(s_1, s_2, \dots, s_n)$, and $\boldsymbol{e} := [1,1,\dots, 1]^\mathsf{T}$. 
The complementarity condition \eqref{eq:KKT3} implies that at an optimal solution, one of the elements $x_i$ or $s_i$ must be zero for $i = 1, 2, \dots, n$.

The following system is obtained by relaxing \eqref{eq:KKT3} to $XS{\boldsymbol e}= \mu {\boldsymbol e}$ with $\mu >0$:
  \begin{equation}\label{eq:Relaxed_KKT}
    XS\boldsymbol{e} = \mu \boldsymbol{e}, \ \ A\boldsymbol{x} = \boldsymbol{b}, \ \ A^\mathsf{T} \boldsymbol{y}+\boldsymbol{s}=\boldsymbol{c}, \ \ \boldsymbol{x}\geq \boldsymbol{0},\ \ \boldsymbol{s} \geq \boldsymbol{0}.
  \end{equation}
The interior-point method solves the problem \eqref{eq:PrimalDualPair} by generating solutions to \eqref{eq:Relaxed_KKT}, with $\mu$ decreasing towards zero, so that \eqref{eq:KKT} is satisfied within some tolerance level at the solution point.
The search direction at each infeasible interior-point step is obtained by solving the Newton equations

\begin{equation}\label{eq:SE1}
\begin{bmatrix} \boldsymbol{0} & A^\mathsf{T} & I \\ A & \boldsymbol{0} & \boldsymbol{0} \\ S & \boldsymbol{0} & X \end{bmatrix}  \left[ \begin{array}{c} \Delta\boldsymbol{x} \\ \Delta\boldsymbol{y} \\ \Delta\boldsymbol{s} \end{array} \right]= \left[\begin{array}{c} \boldsymbol{r}_\mathrm{d} \\ \boldsymbol{r}_\mathrm{p} \\
\boldsymbol{r}_\mathrm{c} 
 \end{array} \right],
\end{equation}
where $\boldsymbol{r}_\mathrm{d} := \boldsymbol{c} - A^\mathsf{T} \boldsymbol{y} - \boldsymbol{s} \in \mathbb{R}^n$ is the residual of the dual problem, $\boldsymbol{r}_\mathrm{p} := \boldsymbol{b} - A\boldsymbol{x}  \in \mathbb{R}^m$ is the residual of the primal problem,  $\boldsymbol{r}_\mathrm{c} := -XS\boldsymbol{e} +  \sigma\mu \boldsymbol{e}$
, $\mu := {\boldsymbol{x}^\mathsf{T}\boldsymbol{s}}/{n}$ is the duality measure, and $\sigma \in [0, 1)$ is the centering parameter, which is dynamically chosen to govern the progress of the interior-point method. 
Once the $k$th iterate $(\boldsymbol{x}^{(k)}, \boldsymbol{y}^{(k)}, \boldsymbol{s}^{(k)})$ is given and \eqref{eq:SE1} is solved, we define the next iterate as $(\boldsymbol{x}^{(k+1)}, \boldsymbol{y}^{(k+1)}, \boldsymbol{s}^{(k+1)}) := (\boldsymbol{x}^{(k)}, \boldsymbol{y}^{(k)}, \boldsymbol{s}^{(k)}) + \alpha (\Delta\boldsymbol{x}, \Delta\boldsymbol{y}, \Delta\boldsymbol{s})$, where $\alpha \in (0, 1]$ is a step length to ensure the positivity of $\boldsymbol{x}$ and $\boldsymbol{s}$, and then reduce $\mu$ to $\sigma\mu$ before solving \eqref{eq:SE1} again.

At each iteration, the solution of \eqref{eq:SE1} dominates the total CPU time.
The choice of linear solvers depends on the way of arranging the matrix of \eqref{eq:SE1}. 
Aside from solving the $(m+2n)\times(m+2n)$ system \eqref{eq:SE1}, one can solve its reduced equivalent form of size  $(m+n)\times(m+n)$ 
\begin{equation}\label{eq:SE2}
\begin{bmatrix} A & 0\\ S & -XA^\mathsf{T} \end{bmatrix}  \left[ \begin{array}{c} \Delta\boldsymbol{x} \\ \Delta\boldsymbol{y} \end{array}\right]= \left[\begin{array}{c} \boldsymbol{r}_\mathrm{p} \\ \boldsymbol{r}_\mathrm{c} - X\boldsymbol{r}_\mathrm{d}\end{array} \right],
\end{equation}
or a more condensed equivalent form of size $m\times m$
\begin{equation}\label{eq:SE3}
AXS^{-1}A^\mathsf{T}\Delta \boldsymbol{y} = \boldsymbol{r}_\mathrm{p} - AS^{-1}(\boldsymbol{r}_\mathrm{c} - X\boldsymbol{r}_\mathrm{d}),
\end{equation}
both of which are obtained by performing block Gaussian eliminations on \eqref{eq:SE1}. 
We are concerned in this paper with solving the third equivalent form \eqref{eq:SE3}.

It is known that the matrix of \eqref{eq:SE3} is semidefinite when any of the following cases is encountered. 
First, when $A$ is rank-deficient, system \eqref{eq:SE3} is singular. 
There exist presolving techniques that address this problem, see, e.g.,~\cite{AndersenAndersen1995,Gondzio1997}. 
However, they do not guarantee to detect all dependent rows in $A$.
Second, in late interior-point iterations, the diagonal matrix $XS^{-1}$ has very tiny and very large diagonal values as a result of convergence.
Thus, the matrix may become positive semidefinite.
In particular, the situation becomes severe when primal degeneracy occurs at an optimal solution.
One can refer to \cite{GondzioTerlaky1996,Zhang1998} for more detailed explanations. 
 
Thus, when direct methods such as Cholesky decomposition are applied to \eqref{eq:SE3}, some diagonal pivots encountered during decomposition can be zero or negative, causing the algorithm to break down. 
Many direct methods adopt a strategy of replacing the problematic pivot with a very large number. 
See, e.g.,~\cite{Zhang1998} for the Cholesky-Infinity factorization, which is specially designed to solve \eqref{eq:SE3} when it is positive semidefinite but not definite. 
Numerical experience \cite{AdlerResendeVeigaKarmarkar1989,LustigMarstenShanno1992,FourerMehrotra1993,LustigMarstenShanno1994,AndersenGondzioMeszarosXu1996,Wright1999,CzyzykMehrotraWagnerWright1999} indicates that direct methods provide sufficiently accurate solutions for interior-point methods to converge regardless of the ill-conditioning of the matrix. 
However, as the LP problems become larger, the significant fill-ins in decompositions make direct methods prohibitively expensive. 
It is stated in \cite{Gondzio2012} that the fill-ins are observed even for very sparse matrices. 
Moreover, the matrix can be dense, as in QP in support vector machine training \cite{FerrisMunson2002} or linear programming in basis pursuit \cite{ChenDonohoSaunders1998}, and even when $A$ is sparse, $AXS^{-1}A^\mathsf{T}$ can be dense or have a pattern of nonzero elements that renders the system difficult for direct methods.
The expensive solution of the KKT systems is a usual disadvantage of second-order methods including interior-point methods. 

These drawbacks of direct methods and the progress in preconditioning techniques motivate researchers to develop stable iterative methods for solving \eqref{eq:SE3} or alternatively \eqref{eq:SE2}. 
The major problem is that as the interior-point iterations proceed, the condition number of the term $XS^{-1}$ increases, making the system of linear equations intractable. 
One way to deal with this is to employ suitable preconditioners.  Since our main focus is on solving \eqref{eq:SE3}, we explain preconditioners for \eqref{eq:SE3} in detail
in the following.  We mention \cite{ChinVannelli1994,FreundJarre1997,FreundJarreMizuno1999,BergamaschiGondzioZilli2004,OliveiraSorensen2005,BergamaschiGondzioVenturinZilli,rees2007preconditioner,al2008preconditioning,al2009convergence} as literature related to preconditioners for \eqref{eq:SE2}.

For the iterative solution of \eqref{eq:SE3}, the conjugate gradient (CG) method \cite{HestenesStiefel1952} has been applied with diagonal scaling preconditioners \cite{CarpenterShanno1993,PortugalResendeVeigaJudice2000,Korzak2000} or incomplete Cholesky preconditioners \cite{Mehrotra1992b,KarmarkarRamakrishnan1991,ChinVannelli1994,MehrotraWang1996}. 
LSQR with a preconditioner was used in \cite{GillMurraySaundersTomlinWright1986}. 
A matrix-free method of using CG for least squares (CGLS) preconditioned by a partial Cholesky decomposition was proposed in \cite{Gondzio2012a}.
In \cite{Cui2009th}, a preconditioner based on Greville's method \cite{CuiHayamiYin2011} for generalized minimal residual (GMRES) method was applied.
Suitable preconditioners were also introduced for particular fields such as the minimum-cost network flow problem in \cite{ResendeVeiga1993,JudicePatricioPortugalResendeVeiga2003,MonteiroONeal2003,MonteiroONealTsuchiya2004}.
One may refer to \cite{DApuzzoDeSimoneDiSerafino2010} for a review on the application of numerical linear algebra algorithms to the solutions of KKT systems in the optimization context.

In this paper, we propose to solve \eqref{eq:SE3} using Krylov subspace methods preconditioned by stationary inner-iterations recently proposed for least squares problems in \cite{HayamiYinIto2010,MorikuniHayami2013,MorikuniHayami2015}. 
In Section \ref{sec:lp_InteriorPoint}, we briefly describe the framework of Mehrotra's predictor-corrector interior-point algorithm we implemented and the normal equations arising from this algorithm. 
In Section \ref{sec:lp_InnerIterKrylov}, we specify the application of our method to the normal equations. 
In Section \ref{sec:lp_NumericalExperiments}, we present numerical results comparing our method with a modified sparse Cholesky method, three direct solvers in \textsc{CVX}, a major public package for specifying and solving convex programs \cite{GrantBoyd2014,GrantBoyd2008}, and direct and iterative solvers in PDCO \cite{saunders2002pdco}. The testing problems include the typical LP problems from the \textsc{Netlib}, \textsc{Qaplib} and \textsc{Mittelmann} collections in \cite{DavisHu2011} and basis pursuit problems from the package Atomizer \cite{chen1995atomizer}.
In Section \ref{sec:lp_conclusion}, we conclude the paper. 

Throughout, we use bold lower case letters for column vectors. 
We denote quantities related to the $k$th interior-point iteration by using a superscript with round brackets, e.g.,~$\boldsymbol{x}^{(k)}$, the $k$th iteration of Krylov subspace methods by using a subscript without brackets, e.g.,~$\boldsymbol{x}_{k}$, and the $k$th inner iteration by using a superscript with angle brackets, e.g.,~$\boldsymbol{x}^{\langle k\rangle}$. $\mathcal{R}(A)$ denotes the range space of a matrix $A$. 
$\kappa(A)$ denotes the condition number $\kappa(A) = \sigma_1(A)/\sigma_r(A)$, where $\sigma_1(A)$ and $\sigma_r(A)$ denote the maximum and minimum nonzero singular values of $A$, respectively.
$\mathcal{K}_k (A, \boldsymbol{b}) = \mathrm{span} \lbrace  \boldsymbol{b}, A \boldsymbol{b}, \dots, A^{k-1} \boldsymbol{b} \rbrace$ denotes the Krylov subspace of order $k$.

\section{Interior-point algorithm and the normal equations}\label{sec:lp_InteriorPoint}
We implement an infeasible version of Mehrotra's predictor-corrector method \cite{Mehrotra1992}, which has been established as a standard in this area \cite{LustigMarstenShanno1992,LustigMarstenShanno1994,Wright1997,MehrotraLi2005}.
Note that our method can be applied to other interior-point methods (see, e.g.,~\cite{Wright1997} for more interior-point methods) whose directions are computed via the normal equations \eqref{eq:SE3}.

\subsection{Mehrotra's predictor-corrector algorithm}
In this method, the centering parameter $\sigma$ is determined by dividing each step into two stages. 

In the first stage, we solve for the affine direction $(\Delta\boldsymbol{x}_\mathrm{af}, \Delta\boldsymbol{y}_\mathrm{af}, \Delta\boldsymbol{s}_\mathrm{af})$
\begin{equation}\label{eq:SE3_predictor}
\begin{bmatrix} \boldsymbol{0} & A^\mathsf{T} & I \\ A & \boldsymbol{0} & \boldsymbol{0} \\ S & \boldsymbol{0} & X \end{bmatrix}  \left[ \begin{array}{c} \Delta\boldsymbol{x}_\mathrm{af} \\ \Delta\boldsymbol{y}_\mathrm{af} \\ \Delta\boldsymbol{s}_\mathrm{af} \end{array} \right]= \left[\begin{array}{c} \boldsymbol{r}_\mathrm{d} \\ \boldsymbol{r}_\mathrm{p} \\ -XS\boldsymbol{e} \end{array} \right],
\end{equation}
and measure its progress in reducing $\mu$. 
If the affine direction makes large enough progress without violating the nonnegative boundary \eqref{eq:KKT4}, then $\sigma$ is assigned a small value. 
Otherwise, $\sigma$ is assigned a larger value to steer the iterate to be more centered in the strictly positive region. 

In the second stage, we solve for the corrector direction $(\Delta\boldsymbol{x}_\mathrm{cc}, \Delta\boldsymbol{y}_\mathrm{cc}, \Delta\boldsymbol{s}_\mathrm{cc})$
\begin{equation}\label{eq:SE3_corrector}
\begin{bmatrix}\boldsymbol{0} & A^\mathsf{T} & I \\ A & \boldsymbol{0} & \boldsymbol{0} \\ S & \boldsymbol{0} & X \end{bmatrix}  \left[ \begin{array}{c} \Delta\boldsymbol{x}_\mathrm{cc} \\ \Delta\boldsymbol{y}_\mathrm{cc} \\ \Delta\boldsymbol{s}_\mathrm{cc} \end{array} \right]= \left[\begin{array}{c} \boldsymbol{0} \\ \boldsymbol{0} \\ -\Delta X_\mathrm{af}\Delta S_\mathrm{af}\boldsymbol{e}+\sigma\mu_\mathrm{af} \boldsymbol{e}\end{array} \right],
\end{equation}
where $\Delta X_\mathrm{af} = \mathrm{diag}(\Delta \boldsymbol{x}_\mathrm{af})$, $\Delta S_\mathrm{af} = \mathrm{diag}(\Delta \boldsymbol{s}_\mathrm{af})$ and $\sigma$ is determined according to the solution in the first stage. 
Finally, we update the current iterate along the linear combination of the two directions.

In our implementation of the interior-point method, 
we adopt Mehrotra's predictor-corrector algorithm as follows.

\begin{algorithm}
\caption{Mehrotra's predictor-corrector algorithm.}\label{algorithm:Mehrotra}
\begin{algorithmic}[1] 
\STATE{Given $(\boldsymbol{x}^{(0)}, \boldsymbol{y}^{(0)}, \boldsymbol{s}^{(0)})$ with $(\boldsymbol{x}^{(0)}, \boldsymbol{s}^{(0)})>\boldsymbol{0}$.}
\FOR{$k = 0,1,2, \dots$ until convergence,}
\STATE{$\mu^{(k)} := {\boldsymbol{x}^{(k)}}^\mathsf{T} \boldsymbol{s}^{(k)}/n$ }    {\COMMENT{\texttt{the predictor stage}}}
\STATE{Solve \eqref{eq:SE3_predictor} for the affine direction $(\Delta\boldsymbol{x}_\mathrm{af}, \Delta\boldsymbol{y}_\mathrm{af}, \Delta\boldsymbol{s}_\mathrm{af})$.\label{algorithmline:predictor}}
\STATE{Compute $\alpha_\mathrm{p}, \; \alpha_\mathrm{d}$. \label{algorithmline:stepLength_1}}
\IF{$\min{(\alpha_\mathrm{p},\; \alpha_\mathrm{d})}\geq 1$}
\STATE{$\sigma := 0$, $\left(\Delta\boldsymbol{x}^{(k)}, \Delta\boldsymbol{y}^{(k)}, \Delta\boldsymbol{s}^{(k)}\right) := \left(\Delta\boldsymbol{x}_\mathrm{af}, \Delta\boldsymbol{y}_\mathrm{af}, \Delta\boldsymbol{s}_\mathrm{af}\right)$}
\ELSE 
\STATE{Set $\mu_\mathrm{af}$ and $\sigma := \text{a small value, e.g., } 0.208$.\label{algorithmline:sigma}} {\COMMENT{\texttt{the corrector stage}}}
\STATE{Solve \eqref{eq:SE3_corrector} for the corrector direction $(\Delta\boldsymbol{x}_\mathrm{cc}, \Delta\boldsymbol{y}_\mathrm{cc}, \Delta\boldsymbol{s}_\mathrm{cc})$.\label{algorithmline:corrector}}
\STATE{$\left(\Delta\boldsymbol{x}^{(k)}, \Delta\boldsymbol{y}^{(k)}, \Delta\boldsymbol{s}^{(k)}\right) := (\Delta\boldsymbol{x}_\mathrm{af}, \Delta\boldsymbol{y}_\mathrm{af}, \Delta\boldsymbol{s}_\mathrm{af}) + (\Delta\boldsymbol{x}_\mathrm{cc}, \Delta\boldsymbol{y}_\mathrm{cc}, \Delta\boldsymbol{s}_\mathrm{cc})$}
\ENDIF
\STATE{Compute $\hat{\alpha}_\mathrm{p}, \; \hat{\alpha}_\mathrm{d}$.\label{algorithmline:stepLength_2}}
\STATE{$\boldsymbol{x}^{(k+1)} := \boldsymbol{x}^{(k)} + \hat{\alpha}_\mathrm{p} \Delta\boldsymbol{x}^{(k)}, \left(\boldsymbol{y}^{(k+1)}, \boldsymbol{s}^{(k+1)}\right) := \left(\boldsymbol{y}^{(k)}, \boldsymbol{s}^{(k)}\right) + \hat{\alpha}_\mathrm{d}\left(\Delta\boldsymbol{y}^{(k)}, \Delta\boldsymbol{s}^{(k)}\right)$}
\ENDFOR
\end{algorithmic}
\end{algorithm}

In line \ref{algorithmline:stepLength_1} in Algorithm \ref{algorithm:Mehrotra}, the step lengths $\alpha_\mathrm{p}, \; \alpha_\mathrm{d}$ are computed by
\begin{equation}\label{eq:steplength}
\alpha_\mathrm{p} = \min{ (1,\eta \min_{i: \Delta x_i<0}(-\frac{ x_i}{\Delta x_i}))},\  \
\alpha_\mathrm{d} = \min{ (1, \eta \min_{i: \Delta s_i<0}(-\frac{ s_i}{\Delta s_i}))},
\end{equation}
where $(\Delta\boldsymbol{x}, \Delta\boldsymbol{s}) = (\Delta\boldsymbol{x}_\mathrm{af}, \Delta\boldsymbol{s}_\mathrm{af}), \eta \in [0.9, 1)$.

In line \ref{algorithmline:sigma}, the quantity $\mu_\mathrm{af}$ is computed by
\begin{equation*}
\mu_\mathrm{af} = (\boldsymbol{x}^{(k)} + \alpha_\mathrm{p}\Delta\boldsymbol{x}_\mathrm{af})^\mathsf{T} (\boldsymbol{s}^{(k)} + \alpha_\mathrm{d}\Delta\boldsymbol{s}_\mathrm{af})/n.
\end{equation*}
In the same line, the parameter $\sigma$ is chosen as $\sigma = \min{ (0.208, (\mu_\mathrm{af}/\mu^{(k)})^2)}$ 
in the early phase of the interior-point iterations. The value $0.208$ and the range $[0.9, 1)$ for $\eta$ are adopted by the LIPSOL package \cite{Zhang1998}.
In the late phase of the interior-point iterations, $\sigma$ is chosen as approximately 10 times the error measure $\Gamma^{(k)}$ which is defined as:
\begin{equation}\label{eq:errorMeasureDef}
\quad\Gamma^{(k)} := \max \left\{\mu^{(k)}, \frac{\|\boldsymbol{b}-A\boldsymbol{x}^{(k)}\|_2}{\max\left\{\|\boldsymbol{b}\|_2,1\right\}}, \frac{\|\boldsymbol{c}-\boldsymbol{s}^{(k)}-A^\mathsf{T}\boldsymbol{y}^{(k)}\|_2}{\max\left\{\|\boldsymbol{c}\|_2,1\right\}}\right\}.
\end{equation}
Here the distinction between \textit{early} and \textit{late} phases is when $\Gamma^{(k)}$ is more or less than $10^{-3}$.

In line \ref{algorithmline:stepLength_2}, we first compute trial step lengths $\alpha_\mathrm{p}, \alpha_\mathrm{d}$ using equations \eqref{eq:steplength} with $(\Delta\boldsymbol{x}, \Delta\boldsymbol{s}) = (\Delta\boldsymbol{x}^{(k)}, \Delta\boldsymbol{s}^{(k)})$. 
Then, we gradually reduce $\alpha_\mathrm{p}, \alpha_\mathrm{d}$ to find the largest step lengths that can ensure the centrality of the updated iterates, i.e., to find the maximum $\hat{\alpha}_\mathrm{p}, \; \hat{\alpha}_\mathrm{d}$ that satisfy
\begin{equation*}
\min_{i}( x_i + \hat{\alpha}_\mathrm{p} \Delta x_i)( s_i + \hat{\alpha}_\mathrm{d} \Delta s_i) \geq \phi(\boldsymbol{x} + \hat{\alpha}_\mathrm{p} \Delta\boldsymbol{x})^\mathsf{T}(\boldsymbol{s} + \hat{\alpha}_\mathrm{d} \Delta\boldsymbol{s})/n,
\end{equation*}
where $\phi$ is typically chosen as $10^{-5}$.

\subsection{The normal equations in the interior-point algorithm}
We consider modifying Algorithm \ref{algorithm:Mehrotra} so that it is not necessary to update $\boldsymbol{y}^{(k)}$.
Since we assume the existence of an optimal solution to problem \eqref{eq:PrimalDualPair}, we have $\boldsymbol{b} \in \mathcal{R}(A)$.
Let $D:=S^{-1/2} X^{1/2}$ and $\mathcal{A} := AD$. 
Problem \eqref{eq:SE3} with $\Delta \boldsymbol{w} = \mathcal{A}^\mathsf{T}\Delta \boldsymbol{y}$ (the normal equations of the second kind) is equivalent to
\begin{align}
  \min \| \Delta \boldsymbol{w} \|_2 \quad \mbox{subject to} \quad \mathcal{A} \Delta \boldsymbol{w} = \boldsymbol{f},
  \label{eq:minnrmprob}
\end{align}
where $\boldsymbol{f}:=\boldsymbol{r}_\mathrm{p} - AS^{-1}(\boldsymbol{r}_\mathrm{c} - X\boldsymbol{r}_\mathrm{d})$.

In the predictor stage, the problem \eqref{eq:SE3_predictor} is equivalent to first solving \eqref{eq:minnrmprob} for $\Delta\boldsymbol{w}_\mathrm{af}$ with $\Delta\boldsymbol{w} = \Delta\boldsymbol{w}_\mathrm{af},\;\boldsymbol{f} = \boldsymbol{f}_\mathrm{af} := \boldsymbol{b}+AS^{-1}X\boldsymbol{r}_\mathrm{d}$, and then updating the remaining unknowns by 
\begin{subequations} \label{eq:step_predictor}
\begin{align}
\label{eq:aff2}\Delta \boldsymbol{s}_\mathrm{af} &= \boldsymbol{r}_\mathrm{d} - D^{-1} \Delta \boldsymbol{w}_\mathrm{af},\\
\label{eq:aff3}\Delta \boldsymbol{x}_\mathrm{af} &= -D^2\Delta\boldsymbol{s}_\mathrm{af} - \boldsymbol{x}.
\end{align}
\end{subequations}

In the corrector stage, the problem \eqref{eq:SE3_corrector} is equivalent to first solving \eqref{eq:minnrmprob} for $\Delta\boldsymbol{w}_\mathrm{cc}$ with $\Delta\boldsymbol{w} = \Delta\boldsymbol{w}_\mathrm{cc},\;\boldsymbol{f} = \boldsymbol{f}_\mathrm{cc} := AS^{-1}\Delta X_\mathrm{af}\Delta S_\mathrm{af}\boldsymbol{e} - \sigma\mu AS^{-1}\boldsymbol{e}$, and then updating the remaining unknowns by 
\begin{subequations}  \label{eq:step_corrector}
\begin{align}
\label{eq:cc2}\Delta\boldsymbol{s}_\mathrm{cc} &= - D^{-1} \Delta\boldsymbol{w}_\mathrm{cc},\\
\label{eq:cc3}\Delta\boldsymbol{x}_\mathrm{cc} &= -D^2\Delta\boldsymbol{s}_\mathrm{cc} -S^{-1}\Delta X_\mathrm{af}\Delta \boldsymbol{s}_\mathrm{af} + \sigma\mu S^{-1}\boldsymbol{e}.
\end{align}
\end{subequations}
By solving \eqref{eq:minnrmprob} for $\Delta\boldsymbol{w}$ instead of solving \eqref{eq:SE3} for $\Delta\boldsymbol{y}$, we can compute $\Delta\boldsymbol{s}_\mathrm{af}$, $\Delta\boldsymbol{ x}_\mathrm{af}$, $\Delta\boldsymbol{ s}_\mathrm{cc}$, and $\Delta\boldsymbol{x}_\mathrm{cc}$ and can save 1MV in \eqref{eq:aff2} and another in \eqref{eq:cc2} if a predictor step is performed per interior-point iteration. Here, MV denotes the computational cost required for one matrix-vector multiplication. 
\begin{remark}
For solving an interior-point step from the condensed step equation~\eqref{eq:SE3} using a suited Krylov subspace method,
updating $(\boldsymbol{x},\boldsymbol{w},\boldsymbol{s})$ rather than $(\boldsymbol{x},\boldsymbol{y},\boldsymbol{s})$ can save 1MV each interior-point iteration.
\end{remark}
Note that in the predictor and corrector stages, problem \eqref{eq:minnrmprob} has the same matrix but different right-hand sides.
We introduce methods for solving it in the next section.

\section{Application of inner-iteration preconditioned Krylov subspace methods}\label{sec:lp_InnerIterKrylov}

In lines \ref{algorithmline:predictor} and \ref{algorithmline:corrector} of Algorithm \ref{algorithm:Mehrotra}, the linear system \eqref{eq:minnrmprob} needs to be solved, with its matrix becoming increasingly ill-conditioned as the interior-point iterations proceed.
In this section, we focus on applying inner-iteration preconditioned Krylov subspace methods to \eqref{eq:minnrmprob} because they are advantageous in dealing with ill-conditioned sparse matrices.
The methods to be discussed are the preconditioned CG and MINRES methods \cite{HestenesStiefel1952,PaigeSaunders1975} applied to the normal equations of the second kind ((P)CGNE and (P)MRNE, respectively) \cite{Craig1955,MorikuniHayami2015},
and the right-preconditioned generalized minimal residual method (AB-GMRES) \cite{HayamiYinIto2010,MorikuniHayami2015}.

Consider solving linear system $\mathbf{A} \mathbf{x} = \mathbf{b}$, where $\mathbf{A} \in \mathbf{R}^{n \times n}$. First, the conjugate gradient (CG) method \cite{HestenesStiefel1952} is an iterative method for such problems when $\mathbf{A}$ is a symmetric and positive (semi)definite matrix and $\mathbf{b} \in \mathcal{R}(\mathbf{A})$. CG starts with an initial approximate solution $\mathbf{x}_0 \in \mathbb{R}^n$ and determines the $k$th iterate $\mathbf{x}_k \in \mathbb{R}^n$ by minimizing $\| \mathbf{x}_k - \mathbf{x}_* \|^2_\mathbf{A}$ over the space $\mathbf{x}_0 + \mathcal{K}_k (\mathbf{A}, \mathbf{r}_0)$, where $\mathbf{r}_0 = \mathbf{b} - \mathbf{A} \mathbf{x}_0$,
$\mathbf{x}_*$ is a solution of $\mathbf{A} \mathbf{x} = \mathbf{b}$, and $\| \mathbf{x}_k - \mathbf{x}_* \|^2_\mathbf{A} := (\mathbf{x}_k - \mathbf{x}_*)^\mathsf{T}\mathbf{A}(\mathbf{x}_k - \mathbf{x}_*)$. 

MINRES \cite{PaigeSaunders1975} is another iterative method for solving such problems but only requires  $\mathbf{A}$ to be symmetric. MINRES with $\mathbf{x}_0$ determines the $k$th iterate $\mathbf{x}_k$ by minimizing $\|  \mathbf{b} - \mathbf{A} \mathbf{x} \|_2$ over the same space as CG.

Third, the generalized minimal residual (GMRES) method \cite{SaadSchultz1986} only requires $\mathbf{A}$ to be square. GMRES with $\mathbf{x}_0$ determines the $k$th iterate $\mathbf{x}_k$ by minimizing $\| \mathbf{b} - \mathbf{A} \mathbf{x} \|_2$ over $\boldsymbol{x}_0 + \mathcal{K}_k (\mathbf{A}, \mathbf{r}_0)$.

\subsection{Application of inner-iteration preconditioned CGNE and MRNE methods}\label{sec:CGNEMRNE}
We first introduce CGNE and MRNE.
Let $\mathbf{A} = \mathcal{A} \mathcal{A}^\mathsf{T}$, $\mathbf{x} = \Delta \boldsymbol{y}_\mathrm{af}$, $\mathbf{b} = \boldsymbol{f}_\mathrm{af}$, and $\Delta \boldsymbol{w}_\mathrm{af} = \mathcal{A}^\mathsf{T} \Delta \boldsymbol{y}_\mathrm{af}$ for the predictor stage, 
and similarly, let $\mathbf{A} = \mathcal{A} \mathcal{A}^\mathsf{T}$, $\mathbf{x} = \Delta \boldsymbol{y}_\mathrm{cc}$, $\mathbf{b} = \boldsymbol{f}_\mathrm{cc}$, and $\Delta \boldsymbol{w}_\mathrm{cc} = \mathcal{A}^\mathsf{T} \Delta \boldsymbol{y}_\mathrm{cc}$ for the corrector stage.
CG and MINRES applied to systems $\mathbf{A} \mathbf{x} = \mathbf{b}$ are CGNE and MRNE, respectively. 
With these settings, let the initial solution $\Delta \boldsymbol{w}_0 \in \mathcal{R}(\mathcal{A}^\mathsf{T})$ in both stages, and denote the initial residual by $\boldsymbol{g}_0:=\boldsymbol{f}-\mathcal{A}\Delta\boldsymbol{w}_0$.
CGNE and MRNE can solve \eqref{eq:minnrmprob} without forming $\mathcal{A} \mathcal{A}^\mathsf{T}$ explicitly. 

Concretely, CGNE gives the $k$th iterate $\Delta \boldsymbol{w}_k$ such that $\| \Delta \boldsymbol{w}_k - \Delta \boldsymbol{w}_* \|_2 = \linebreak \min_{\Delta \boldsymbol{w} \in \Delta \boldsymbol{w}_0 + \mathcal{K}_k (\mathcal{A}^\mathsf{T} \! \mathcal{A}, \mathcal{A}^\mathsf{T} \boldsymbol{g}_0)} \| \Delta \boldsymbol{w} - \Delta \boldsymbol{w}_* \|_2$, where $\Delta \boldsymbol{w}_*$ is the minimum-norm solution of $\mathcal{A} \Delta \boldsymbol{w} = \boldsymbol{f}$ for $\Delta \boldsymbol{w}_0 \in \mathcal{R}(\mathcal{A}^\mathsf{T})$ and $\boldsymbol{f} \in \mathcal{R}(\mathcal{A})$.
MRNE gives the $k$th iterate $\Delta \boldsymbol{w}_k$ such that $\| \boldsymbol{f} - \mathcal{A} \Delta \boldsymbol{w}_k \|_2 = \min_{\Delta \boldsymbol{w} \in \Delta \boldsymbol{w}_0 + \mathcal{K}_k (\mathcal{A}^\mathsf{T} \! \mathcal{A}, \mathcal{A}^\mathsf{T} \! \boldsymbol{g}_0)}  \| \boldsymbol{f} - \mathcal{A} \Delta \boldsymbol{w} \|_2$. 

We use inner-iteration preconditioning for CGNE and MRNE methods.
The following is a brief summary of the part of \cite{MorikuniHayami2015} where the inner-outer iteration method is analyzed. 
We give the expressions for the inner-iteration preconditioning and preconditioned matrices to state the conditions under which the former is SPD.
Let $M$ be a symmetric nonsingular splitting matrix of $\mathcal{A} \mathcal{A}^\mathsf{T}$ such that $\mathcal{A} \mathcal{A}^\mathsf{T}  = M - N$.
Denote the inner-iteration matrix by $H = M^{-1} N$.
The inner-iteration preconditioning and preconditioned matrices are $C^{\langle\ell\rangle} = \sum_{i = 0}^{\ell - 1} H^i M^{-1}$ and $\mathcal{A} \mathcal{A}^\mathsf{T} C^{\langle\ell\rangle} = M\sum_{i=0}^{\ell-1} (I- H) H^iM^{-1} = M(I - H^\ell)M^{-1}$, respectively.
If $C^{\langle\ell\rangle}$ is nonsingular, then $\mathcal{A} \mathcal{A}^\mathsf{T} C^{\langle\ell\rangle} \boldsymbol{u} = \boldsymbol{f}$, $\boldsymbol{z} = C^{\langle\ell\rangle} \boldsymbol{u}$ is equivalent to $\mathcal{A} \mathcal{A}^\mathsf{T} \boldsymbol{z} = \boldsymbol{f}$ for all $\boldsymbol{f} \in \mathcal{R}(\mathcal{A})$.
For $\ell$ odd, $C^{\langle\ell\rangle}$ is symmetric and positive definite (SPD) if and only if the inner-iteration $M$ is SPD;
for $\ell$ even, $C^{\langle\ell\rangle}$ is SPD if and only if $M + N$ is SPD \cite[Theorem 2.8]{Morikuni2015a,Morikuni2018}.
We give Algorithms \ref{algorithm:CGNE}, \ref{algorithm:MRNE} for CGNE and MRNE preconditioned by inner iterations \cite[Algorithms E.3, E.4]{MorikuniHayami2015}.

\begin{algorithm}
\caption{CGNE method preconditioned by inner iterations.} \label{algorithm:CGNE}
\begin{algorithmic}[1]
\STATE{Let $\Delta\boldsymbol{w}_0$ be the initial approximate solution, and $\boldsymbol{g}_0:=\boldsymbol{f}-\mathcal{A}\Delta\boldsymbol{w}_0$.}
\STATE{Apply $\ell$ steps of a stationary iterative method to $\mathcal{A}\mathcal{A}^\mathsf{T}\boldsymbol{z}=\boldsymbol{g}_0,\;\boldsymbol{u}=\mathcal{A}^\mathsf{T}\boldsymbol{z}$ to obtain $\boldsymbol{z}_0:=\mathcal{C}^{\langle\ell\rangle}\boldsymbol{g}_0$ and $\boldsymbol{u}_0:=\mathcal{A}^\mathsf{T}\boldsymbol{z}_0$.\label{algorithmline:CGNEprecond1}}
\STATE{$\boldsymbol{q}_0:=\boldsymbol{u}_0, \gamma_0:=(\boldsymbol{g}_0, \boldsymbol{z}_0)$}
\FOR{$k = 0, 1, 2,\dots $ until convergence,}
\STATE{$\alpha_k:=\gamma_k/(\boldsymbol{q}_k, \boldsymbol{q}_k),\;\; \Delta\boldsymbol{w}_{k+1}:=  \Delta\boldsymbol{w}_{k} + \alpha \boldsymbol{q}_k,\;\; \boldsymbol{g}_{k+1}:=\boldsymbol{g}_k-\alpha_k\mathcal{A}\boldsymbol{q}_k$}
\STATE{Apply $\ell$ steps of a stationary iterative method to $\mathcal{A}\mathcal{A}^\mathsf{T}\boldsymbol{z}=\boldsymbol{g}_{k+1}$ to obtain $\boldsymbol{z}_{k+1}:=\mathcal{C}^{\langle\ell\rangle}\boldsymbol{g}_{k+1}$ and $\boldsymbol{u}_{k+1}:=\mathcal{A}^\mathsf{T}\boldsymbol{z}_{k+1}$.\label{algorithmline:CGNEprecond2}}
\STATE{$\gamma_{k+1}:=(\boldsymbol{g}_{k+1}, \boldsymbol{z}_{k+1}), \;\; \beta_k := \gamma_{k+1}/\gamma_{k},\;\; \boldsymbol{q}_{k+1}:=\boldsymbol{u}_{k+1}+ \beta_k\boldsymbol{q}_k$}
\ENDFOR
\end{algorithmic}
\end{algorithm}
\begin{algorithm}
\caption{MRNE method preconditioned by inner iterations.}\label{algorithm:MRNE}
\begin{algorithmic}[1]
\STATE{Let $\Delta\boldsymbol{w}_0$ be the initial approximate solution, and $\boldsymbol{g}_0:=\boldsymbol{f}-\mathcal{A}\Delta\boldsymbol{w}_0$.}
\STATE{Apply $\ell$ steps of a stationary iterative method to $\mathcal{A}\mathcal{A}^\mathsf{T}\boldsymbol{u}=\boldsymbol{g}_0, \;\boldsymbol{q}=\mathcal{A}^\mathsf{T}\boldsymbol{u}$ to obtain $\boldsymbol{q}_0:=\mathcal{A}^\mathsf{T}\mathcal{C}^{\langle\ell\rangle}\boldsymbol{g}_0$.\label{algorithmline:MRNEprecond1}}
\STATE{$\boldsymbol{p}_0:= \boldsymbol{q}_0, \; \gamma_0:=\|\boldsymbol{q}_0\|_2^2$}
\FOR{$k = 1,2,\dots$ until convergence,}
\STATE{$\boldsymbol{t}_k:=\mathcal{A}\boldsymbol{p}_k$}
\STATE{Apply $\ell$ steps of a stationary iterative method to $\mathcal{A}\mathcal{A}^\mathsf{T}\boldsymbol{u}=\boldsymbol{t}_k, \;\boldsymbol{v} = \mathcal{A}^\mathsf{T}\boldsymbol{u}$ to obtain $\boldsymbol{v}_k:=\mathcal{A}^\mathsf{T}\mathcal{C}^{\langle\ell\rangle}\boldsymbol{t}_k$.\label{algorithmline:MRNEprecond2}}
\STATE{$\alpha_{k}:=\gamma_k/(\boldsymbol{v}_k, \boldsymbol{p}_k), \;\Delta\boldsymbol{w}_k := \Delta\boldsymbol{w}_k + \alpha_k\boldsymbol{p}_k, \;\boldsymbol{g}_{k+1} := \boldsymbol{g}_k - \alpha_k\boldsymbol{t}_k, \boldsymbol{q}_{k+1}:=\boldsymbol{q}_k-\alpha_k\boldsymbol{v}_k$}
\STATE{$\gamma_k:= \|\boldsymbol{q}_{k+1}\|_2^2, \;\beta_{k}:= \gamma_{k+1}/\gamma_{k}, \;\boldsymbol{p}_{k+1}:=\boldsymbol{q}_k + \beta_k\boldsymbol{p}_k$}
\ENDFOR
\end{algorithmic}
\end{algorithm}

\subsection
{Application of inner-iteration preconditioned AB-GMRES meth\-od}\label{sec:ABGMRES}
Next, we introduce AB-GMRES.
GMRES can solve a square linear system transformed from the rectangular system $\mathcal{A} \Delta \boldsymbol{w}_\mathrm{af} = \boldsymbol{f}_\mathrm{af}$ in the predictor stage and $\mathcal{A} \Delta \boldsymbol{w}_\mathrm{cc} = \boldsymbol{f}_\mathrm{cc}$ in the corrector stage by using a rectangular right-preconditioning matrix that does not necessarily have to be $\mathcal{A^\mathsf{T}}$.
Let $\mathcal{B} \in \mathbb{R}^{n \times m}$ be a preconditioning matrix for $\mathcal{A}$.
Then, AB-GMRES corresponds to GMRES \cite{SaadSchultz1986} applied to 
\begin{align*}
  \mathcal{A} \mathcal{B} \boldsymbol{z} = \boldsymbol{f}, \quad \Delta\boldsymbol{w} = \mathcal{B} \boldsymbol{z},
\end{align*}
which is equivalent to the minimum-norm solution to the problem \eqref{eq:minnrmprob},
for all $\boldsymbol{f} \in \mathcal{R}(\mathcal{A})$ if $\mathcal{R}(\mathcal{B}) = \mathcal{R}(\mathcal{A}^\mathsf{T})$ \cite[Theorem 5.2]{MorikuniHayami2015}, where $\Delta \boldsymbol{w} = \Delta \boldsymbol{w}_\mathrm{af}$ or $\Delta \boldsymbol{w}_\mathrm{cc}$, $\boldsymbol{f} = \boldsymbol{f}_\mathrm{af}$ or $\boldsymbol{f}_\mathrm{cc}$, respectively.
AB-GMRES gives the $k$th iterate $\Delta \boldsymbol{w}_k = \mathcal{B} \boldsymbol{z}_k$ such that $\boldsymbol{z}_k=\arg\!\min_{\boldsymbol{z} \in \boldsymbol{z}_0 + \mathcal{K}_k (\mathcal{A} \mathcal{B}, \boldsymbol{g}_0)} \| \boldsymbol{f} - \mathcal{A} \mathcal{B} \boldsymbol{z} \|_2$, where $\boldsymbol{z}_0$ is the initial iterate and $\boldsymbol{g}_0 = \boldsymbol{f} - \mathcal{A} \mathcal{B} \boldsymbol{z}_0$.

Specifically, we apply AB-GMRES preconditioned by inner iterations \cite{MorikuniHayami2013,MorikuniHayami2015} to \eqref{eq:minnrmprob}.
This method was shown to outperform previous methods on ill-conditioned and rank-deficient problems.
We give expressions for the inner-iteration preconditioning and preconditioned matrices.
Let $M$ be a nonsingular splitting matrix such that $\mathcal{A} \mathcal{A}^\mathsf{T}  = M - N$.
Denote the inner-iteration matrix by $H = M^{-1} N$.
With $C^{\langle\ell\rangle} = \sum_{i = 0}^{\ell - 1} H^i M^{-1}$, the inner-iteration preconditioning and preconditioned matrices are $\mathcal{B}^{\langle\ell\rangle} = \mathcal{A}^\mathsf{T} C^{\langle\ell\rangle}$ and $\mathcal{A} \mathcal{B}^{\langle\ell\rangle} = \sum_{i=0}^{\ell-1} (I- H) H^i = M(I - H^\ell)M^{-1}$, respectively.
If the inner-iteration matrix $H$ is semiconvergent, i.e., $\lim_{i \rightarrow \infty} H^i$ exists, then AB-GMRES preconditioned by the inner-iterations determines the minimum-norm solution of $\mathcal{A} \Delta \boldsymbol{w} = \boldsymbol{f}$ without breakdown for all $\boldsymbol{f} \in \mathcal{R}(\mathcal{A})$ and for all $\Delta \boldsymbol{w}_0 \in \mathcal{R}(\mathcal{A}^\mathsf{T})$ \cite[Theorem 5.5]{MorikuniHayami2015}.
The inner-iteration preconditioning matrix $\mathcal{B}^{\langle\ell\rangle}$ works on $\mathcal{A}$ in AB-GMRES as in Algorithm \ref{algorithm:AB-GMREStoNE} \cite[Algorithm 5.1]{MorikuniHayami2015}.

\begin{algorithm}[H]
\caption{AB-GMRES method preconditioned by inner iterations.} \label{algorithm:AB-GMREStoNE}
\begin{algorithmic}[1]
\STATE{Let $\Delta\boldsymbol{w}_0 \in \mathbb{R}^n$ be the initial approximate solution, and $\boldsymbol{g}_0:=\boldsymbol{f}-\mathcal{A}\Delta\boldsymbol{w}_0$.}
\STATE{$\beta:= \|\boldsymbol{g}_0\|_2, \;\; \boldsymbol{v}_1:=\boldsymbol{r}_0/\beta$}
\FOR{$k = 1,2,\dots$ until convergence,}
\STATE{Apply $\ell$ steps of a stationary iterative method to $\mathcal{A}\mathcal{A}^\mathsf{T}\boldsymbol{p}=\boldsymbol{v}_k,\;\boldsymbol{z}=\mathcal{A}^\mathsf{T}\boldsymbol{p}$ to obtain $\boldsymbol{z}_k:=\mathcal{B}^{\langle\ell\rangle}\boldsymbol{v}_k$.\label{algorithmline:ABprecond1}}
\STATE{$\boldsymbol{u}_k:=\mathcal{A}\boldsymbol{z}_k$}
\FOR{$i = 1,2,\dots, k$,}
\STATE{$h_{i,k}:=(\boldsymbol{u}_k,\boldsymbol{v}_i),\;\boldsymbol{u}_k:=\boldsymbol{u}_k-h_{i,k}\boldsymbol{v}_i$}
\ENDFOR
\STATE{$h_{k+1,k}:=\|\boldsymbol{u}_k\|_2,\;\boldsymbol{v}_{k+1}:=\boldsymbol{u}_k/h_{k+1,k}$}
\ENDFOR
\STATE{$\boldsymbol{p}_k:=\mathrm{arg} \min_{\boldsymbol{p}\in R^k}{\|\beta \boldsymbol{e_1} - \bar{H}_k\boldsymbol{p}\|_2},\; \boldsymbol{q}_k = [\boldsymbol{v}_1, \boldsymbol{v}_2, \dots, \boldsymbol{v}_k]\boldsymbol{p}_k$}
\STATE{Apply $\ell$ steps of a stationary iterative method to $\mathcal{A}\mathcal{A}^\mathsf{T}\boldsymbol{p}=\boldsymbol{q}_k,\;\boldsymbol{z}=\mathcal{A}^\mathsf{T}\boldsymbol{p}$ to obtain $\boldsymbol{z}':=\mathcal{B}^{\langle\ell\rangle}\boldsymbol{q}_k$. \label{algorithmline:ABprecond2}}
\STATE{$\Delta\boldsymbol{w}_k:=\Delta\boldsymbol{w}_0+\boldsymbol{z}'$}
\end{algorithmic}
\end{algorithm}

Here, $\boldsymbol{v}_1, \boldsymbol{v}_2, \dots, \boldsymbol{v}_k$ are orthonormal, $\boldsymbol{e}_1$ is the first column of the identity matrix, and $\bar{H}_k = \lbrace h_{i, j} \rbrace \in \mathbb{R}^{(k+1) \times k}$.

Note that the left-preconditioned generalized minimal residual method (BA-GMRES) \cite{HayamiYinIto2010,MorikuniHayami2013,MorikuniHayami2015} can be applied to solve the corrector stage problem, which can be written as the normal equations of the first kind
\begin{equation*}
\mathcal{A}\mathcal{A}^\mathsf{T}\Delta\boldsymbol{y}_\mathrm{cc} = \mathcal{A}(SX)^{-1/2}\left( \Delta X_\mathrm{af}\Delta S_\mathrm{af}\boldsymbol{e} - \sigma\mu \boldsymbol{e}\right),
\end{equation*}
or equivalently
\begin{equation}\label{eq:ba_ne}
\min_{\Delta\boldsymbol{y}_\mathrm{cc}} \| \mathcal{A}^\mathsf{T} \Delta\boldsymbol{y}_\mathrm{cc} -(SX)^{-1/2}\left( \Delta X_\mathrm{af}\Delta S_\mathrm{af}\boldsymbol{e} - \sigma\mu \boldsymbol{e}\right) \|_2.
\end{equation}
In fact, this formulation was adopted in \cite{Gondzio2012} and solved by the CGLS method preconditioned by partial Cholesky decomposition that
works in $m$-dimen-sional space.
The BA-GMRES also works in $m$-dimensional space.

The advantage of the inner-iteration preconditioning methods is that we can avoid explicitly computing and storing the preconditioning matrices for $\mathcal{A}$ in \eqref{eq:minnrmprob}.
We present efficient algorithms for specific inner iterations in the next section.

\subsection{SSOR inner iterations for preconditioning the CGNE and MRNE me\-thods}
The inner-iteration preconditioned CGNE and MRNE methods require a symmetric preconditioning matrix.
This is achieved by the SSOR inner-iteration preconditioning, which works on the normal equations of the second kind $\mathcal{A} \mathcal{A}^\mathsf{T} \boldsymbol{z} = \boldsymbol{g}$, $\boldsymbol{u} = \mathcal{A}^\mathsf{T} \boldsymbol{z}$, and its preconditioning matrix $C^{\langle\ell\rangle}$ is SPD for $\ell$ odd for $\omega \in (0, 2)$ \cite[Theorem 2.8]{Morikuni2015a,Morikuni2018}.
This method exploits a symmetric splitting matrix by the forward updates, $i = 1, 2, \dots, m$ in lines \ref{algorithmlinSSOR_l3}--\ref{algorithmlinSSOR_l6} in Algorithm \ref{algorithm:NESOR} and the reverse updates, $i = m, m-1, \dots, 1$, and can be efficiently implemented as the NE-SSOR method \cite{Saad2003}, \cite[Algorithm D.8]{MorikuniHayami2015}.
See \cite{BjorckElfving1979} where SSOR preconditioning for CGNE with $\ell = 1$ is proposed.
Let $\boldsymbol{\alpha}_i^\mathsf{T}$ be the $i$th row vector of $\mathcal{A}$. \
Algorithm \ref{algorithm:NESSOR} shows the NE-SSOR method.
\begin{algorithm}
\caption{NE-SSOR method.}\label{algorithm:NESSOR}
\begin{algorithmic}[1]
\STATE{Let $\boldsymbol{z}^{\langle 0\rangle}=\boldsymbol{0}$ and $\boldsymbol{u}^{\langle 0\rangle}=\boldsymbol{0}$.}
\FOR{$k = 1,2,\dots, \ell$,}
\FOR{$i = 1,2,\dots, m$,}\label{algorithmlinSSOR_l3}
\STATE{$d_i^{\langle k-\frac{1}{2}\rangle}:= \omega[g_i - (\boldsymbol{\alpha}_i,\boldsymbol{u}^{\langle k-1\rangle})]/\|\boldsymbol{\alpha}_i\|_2^2$}
\STATE{$z_i^{\langle k-\frac{1}{2}\rangle}:= z_i^{\langle k-1\rangle} + d_i^{\langle k-\frac{1}{2}\rangle},\boldsymbol{u}^{\langle k-1\rangle}:= \boldsymbol{u}^{\langle k-1\rangle} +d_i^{\langle k-\frac{1}{2}\rangle}\boldsymbol{\alpha}_i$}
\ENDFOR\label{algorithmlinSSOR_l6}

\FOR{$i = m,m-1,\dots, 1$,}
\STATE{$d_i^{\langle k\rangle}:= \omega[g_i - (\boldsymbol{\alpha}_i,\boldsymbol{u}^{\langle k-1\rangle})]/\|\boldsymbol{\alpha}_i\|_2^2$}
\STATE{$z_i^{\langle k\rangle}:= z_i^{\langle k-\frac{1}{2}\rangle} + d_i^{\langle k\rangle},\boldsymbol{u}^{\langle k-1\rangle}:= \boldsymbol{u}^{\langle k-1\rangle} +d_i^{\langle k\rangle}\boldsymbol{\alpha}_i$}
\ENDFOR
\STATE{$\boldsymbol{u}^{\langle k\rangle}:=\boldsymbol{u_{_{}}}^{\langle k-1\rangle}$}
\ENDFOR
\end{algorithmic}
\end{algorithm}

When Algorithm \ref{algorithm:NESSOR} is applied to lines \ref{algorithmline:CGNEprecond1} and \ref{algorithmline:CGNEprecond2} of Algorithm \ref{algorithm:CGNE} and lines \ref{algorithmline:MRNEprecond1} and \ref{algorithmline:MRNEprecond2} of Algorithm \ref{algorithm:MRNE}, the normal equations of the second kind are solved approximately.

\subsection{SOR inner iterations for preconditioning the AB-GMRES method}
Next, we introduce the SOR method applied to the normal equations of the second kind $\mathcal{A} \mathcal{A}^\mathsf{T} \boldsymbol{p} = \boldsymbol{g}$, $\boldsymbol{z} = \mathcal{A}^\mathsf{T} \boldsymbol{p}$ with $\boldsymbol{g} = \boldsymbol{v}_k$ or $\boldsymbol{q}_k$ as used in Algorithm \ref{algorithm:AB-GMREStoNE}. 
If the relaxation parameter $\omega$ satisfies $\omega \in (0, 2)$, then the iteration matrix $H$ of this method is semiconvergent, i.e., $\lim_{i \rightarrow \infty} H^i$ exists \cite{Dax1990}.
An efficient algorithm for this method is called NE-SOR and is given as follows  \cite{Saad2003}, \cite[Algorithm D.7]{MorikuniHayami2015}.

\begin{algorithm}
\caption{NE-SOR method.}\label{algorithm:NESOR}
\begin{algorithmic}[1]
\STATE{Let $\boldsymbol{z}^{\langle 0\rangle}=\boldsymbol{0}$.}
\FOR{$k = 1,2,\dots, \ell$,}
\FOR{$i = 1,2, \dots, m$,}
\STATE{$d_i^{\langle k\rangle}:=\omega[ g_i - (\boldsymbol{\alpha}_i, \;\boldsymbol{z}^{\langle k-1\rangle})] / \|\boldsymbol{\alpha}_i\|_2^2, \;\boldsymbol{z}^{\langle k-1\rangle}:=\boldsymbol{z}^{\langle k-1\rangle}+d_i^{\langle k\rangle}\boldsymbol{\alpha}_i$\label{algorithmline:NESOR_mod}}
\ENDFOR
\STATE{$\boldsymbol{z}^{\langle k\rangle}:=\boldsymbol{z}^{\langle k-1\rangle}$}
\ENDFOR
\end{algorithmic}
\end{algorithm}

When Algorithm \ref{algorithm:NESOR} is applied to lines \ref{algorithmline:ABprecond1} and \ref{algorithmline:ABprecond2} of Algorithm \ref{algorithm:AB-GMREStoNE}, the normal equations of the second kind are solved approximately.

Since the rows of $A$ are required in the NE-(S)SOR iterations, it would be more efficient if $A$ is stored row-wise.

\subsection{\texorpdfstring{Row-scaling of $\mathcal{A}$}{Row-scaling of A}}\label{sec:rowscaling}
Let $\mathcal{D}$ be a diagonal matrix whose diagonal elements are positive. Then, problem \eqref{eq:minnrmprob} is equivalent to
\begin{equation}\label{eq:scaleD}
\min\|\Delta\boldsymbol{w}\|_2 \quad \mbox{subject to} \quad  \mathcal{D}^{-1}\mathcal{A}\Delta\boldsymbol{w} = \mathcal{D}^{-1}\boldsymbol{f}.
\end{equation}
Denote $\hat{\mathcal{A}}:=\mathcal{D}^{-1}\mathcal{A}$ and $\hat{f}:=\mathcal{D}^{-1}\boldsymbol{f}$. Then, the scaled problem \eqref{eq:scaleD} is
\begin{equation}\label{eq:scaled_ls}
\min\|\Delta\boldsymbol{w}\|_2 \quad \mbox {subject to} \quad \hat{\mathcal{A}}\Delta\boldsymbol{w} = \hat{\boldsymbol{f}}.
\end{equation}
If $\hat{\mathcal{B}}\in\mathbb{R}^{n\times m}$ satisfies $\mathcal{R}(\hat{\mathcal{B}}) = \mathcal{R}(\hat{\mathcal{A}}^\mathsf{T})$,
then \eqref{eq:scaled_ls} is equivalent to
\begin{equation}\label{eq:scaled_AB}
\hat{\mathcal{A}}\hat{\mathcal{B}}\hat{\boldsymbol{z}} = \hat{\boldsymbol{f}}, \qquad \Delta\boldsymbol{w} = \hat{\mathcal{B}}\hat{\boldsymbol{z}}
\end{equation}
for all $\hat{\boldsymbol{f}}\in \mathcal{R}(\hat{\mathcal{A}})$.
The methods discussed earlier can be applied to \eqref{eq:scaled_AB}.
In the NE-(S)SOR inner iterations, one has to compute $\|\hat{\boldsymbol{\alpha}}_i\|_2$, the norm of the $i$th row of $\hat{\mathcal{A}}$. 
However, this can be omitted if the $i$th diagonal element of $\mathcal{D}$ is chosen as the norm of the $i$th row of $\mathcal{A}$, that is, $\mathcal{D}(i,i):=\|\boldsymbol{\alpha}_i\|_2, \; i = 1, \dots, m$.
With this choice, the matrix $\hat{\mathcal{A}}$ has unit row norm
$\|\hat{\boldsymbol{\alpha}}_i\|_2 = 1, \;  i = 1, \dots, m$.
Hence, we do not have to compute the norms $\|\hat{\boldsymbol{\alpha}}_i\|_2$ inside the NE-(S)SOR inner iterations if we compute the norms $\|\boldsymbol{\alpha}_i\|_2$ for the construction of the scaling matrix $\mathcal{D}$. 
The row-scaling scheme does not incur extra CPU time. We observe in the numerical results that this scheme improves the convergence of the Krylov subspace methods.

CGNE and MRNE preconditioned by inner iterations applied to a scaled linear system $\mathcal{D}^{-1} \! \mathcal{A} \Delta \boldsymbol{w} = \mathcal{D}^{-1} \boldsymbol{f}$ are equivalent to CG and MINRES applied to $\mathcal{D}^{-1} \mathcal{A} \mathcal{A}^\mathsf{T} C^{\langle\ell\rangle} \mathcal{D} \boldsymbol{v} = \boldsymbol{f}$, $\Delta \boldsymbol{w} = \mathcal{A}^\mathsf{T} C^{\langle\ell\rangle} \mathcal{D} \boldsymbol{v}$, respectively, and hence determine the minimum-norm solution of $\mathcal{A} \Delta \boldsymbol{w} = \boldsymbol{f}$ for all $\boldsymbol{f} \in \mathcal{R}(\mathcal{A})$ and for all $\Delta \boldsymbol{w}_0 \in \mathbb{R}^n$ if $C^{\ell}$ is SPD.
Now we give conditions under which AB-GMRES preconditioned by inner iterations applied to a scaled linear system $\mathcal{D}^{-1} \! \mathcal{A} \Delta \boldsymbol{w} = \mathcal{D}^{-1} \boldsymbol{f}$ determines the minimum-norm solution of the unscaled one $\mathcal{A} \Delta \boldsymbol{w} = \boldsymbol{f}$.

\begin{lemma}\label{lm:AB}
If $\mathcal{R}(\mathcal{B}) = \mathcal{R}(\mathcal{A}^\mathsf{T})$ and $\mathcal{D} \in \mathbb{R}^{m \times m}$ is nonsingular, then AB-GMRES applied to $\mathcal{D}^{-1} \! \mathcal{A} \Delta \boldsymbol{w} = \mathcal{D}^{-1} \boldsymbol{f}$ determines the solution of $\min \| \Delta \boldsymbol{w} \|_2$, subject to $\mathcal{A} \Delta \boldsymbol{w} = \boldsymbol{f}$ without breakdown for all $\boldsymbol{f} \in \mathcal{R}(\mathcal{A})$ and for all $\Delta \boldsymbol{w}_0 \in \mathbb{R}^n$ if and only if $\mathcal{N}(\mathcal{B}) \cap \mathcal{R}(\mathcal{D}^{-1} \! \mathcal{A}) = \lbrace \boldsymbol{0} \rbrace$.
\end{lemma}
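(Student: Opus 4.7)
The plan is to reduce the claim to a standard GMRES-on-singular-systems characterization by identifying AB-GMRES on the scaled problem with plain GMRES on a suitable square system. Concretely, I would set $C := \mathcal{D}^{-1} \mathcal{A} \mathcal{B} \in \mathbb{R}^{m \times m}$. Inspection of Algorithm~\ref{algorithm:AB-GMREStoNE} shows that running AB-GMRES with the right-preconditioner $\mathcal{B}$ on the scaled system $\mathcal{D}^{-1} \mathcal{A} \Delta \boldsymbol{w} = \mathcal{D}^{-1} \boldsymbol{f}$ from initial iterate $\Delta \boldsymbol{w}_0$ is, step for step, standard GMRES on $C \boldsymbol{z} = \boldsymbol{g}_0$ with $\boldsymbol{g}_0 := \mathcal{D}^{-1}(\boldsymbol{f} - \mathcal{A} \Delta \boldsymbol{w}_0)$ from $\boldsymbol{z}_0 = \boldsymbol{0}$, followed by the update $\Delta \boldsymbol{w}_k = \Delta \boldsymbol{w}_0 + \mathcal{B} \boldsymbol{z}_k$. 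Both ``no breakdown'' and ``determines a solution'' therefore translate directly from statements about GMRES on $C$.

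Next I would compute $\mathcal{R}(C)$ and $\mathcal{N}(C)$ explicitly using $\mathcal{R}(\mathcal{B}) = \mathcal{R}(\mathcal{A}^\mathsf{T})$. Since $\mathcal{A}\, \mathcal{R}(\mathcal{A}^\mathsf{T}) = \mathcal{R}(\mathcal{A})$ and $\mathcal{D}$ is nonsingular, $\mathcal{R}(C) = \mathcal{D}^{-1} \mathcal{A}\, \mathcal{R}(\mathcal{B}) = \mathcal{R}(\mathcal{D}^{-1} \mathcal{A})$. For the null space, $\boldsymbol{z} \in \mathcal{N}(C)$ forces $\mathcal{A} \mathcal{B} \boldsymbol{z} = \boldsymbol{0}$, so $\mathcal{B} \boldsymbol{z} \in \mathcal{R}(\mathcal{A}^\mathsf{T}) \cap \mathcal{N}(\mathcal{A}) = \{\boldsymbol{0}\}$, which gives $\boldsymbol{z} \in \mathcal{N}(\mathcal{B})$; the reverse inclusion is trivial, whence $\mathcal{N}(C) = \mathcal{N}(\mathcal{B})$. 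The consistency of the GMRES system is then automatic: $\boldsymbol{f} \in \mathcal{R}(\mathcal{A})$ implies $\boldsymbol{g}_0 \in \mathcal{R}(\mathcal{D}^{-1}\mathcal{A}) = \mathcal{R}(C)$ for every $\Delta \boldsymbol{w}_0$.

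Then I would invoke the classical characterization of GMRES on a singular consistent system (Brown--Walker, Hayami--Sugihara, and used in \cite{MorikuniHayami2015}): GMRES applied to $C \boldsymbol{z} = \boldsymbol{c}$ determines a solution without breakdown for every $\boldsymbol{c} \in \mathcal{R}(C)$ and every initial iterate if and only if $\mathcal{R}(C) \cap \mathcal{N}(C) = \{\boldsymbol{0}\}$, i.e.\ $C$ has index at most one. Substituting the identifications from the previous paragraph converts this condition into $\mathcal{R}(\mathcal{D}^{-1} \mathcal{A}) \cap \mathcal{N}(\mathcal{B}) = \{\boldsymbol{0}\}$, which is exactly the stated equivalence. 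Finally, whenever GMRES returns some $\boldsymbol{z}$ with $C \boldsymbol{z} = \boldsymbol{g}_0$, the identity $\mathcal{A}(\Delta \boldsymbol{w}_0 + \mathcal{B} \boldsymbol{z}) = \mathcal{A} \Delta \boldsymbol{w}_0 + \mathcal{D} C \boldsymbol{z} = \boldsymbol{f}$ exhibits $\Delta \boldsymbol{w}_0 + \mathcal{B} \boldsymbol{z}$ as a solution of $\mathcal{A} \Delta \boldsymbol{w} = \boldsymbol{f}$; since $\mathcal{B} \boldsymbol{z} \in \mathcal{R}(\mathcal{A}^\mathsf{T})$, this solution is the minimum-norm one when $\Delta \boldsymbol{w}_0 \in \mathcal{R}(\mathcal{A}^\mathsf{T})$.

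The main obstacle is the singular-GMRES lemma itself. Its ``if'' direction is the standard observation that under $\mathcal{R}(C) \cap \mathcal{N}(C) = \{\boldsymbol{0}\}$ the Arnoldi process cannot stall before the residual hits zero, while the ``only if'' direction is the more delicate one and is typically proved by exhibiting a starting residual whose Krylov sequence is blocked by a nonzero vector in $\mathcal{R}(C) \cap \mathcal{N}(C)$; I would either cite this verbatim from \cite{MorikuniHayami2015} or reproduce its short argument. A minor wrinkle to flag is that the lemma allows any $\Delta \boldsymbol{w}_0 \in \mathbb{R}^n$, whereas the ``$\min \| \Delta \boldsymbol{w} \|_2$'' interpretation is strictly valid only under $\Delta \boldsymbol{w}_0 \in \mathcal{R}(\mathcal{A}^\mathsf{T})$; for a general starting iterate one recovers a solution of $\mathcal{A} \Delta \boldsymbol{w} = \boldsymbol{f}$ that equals the minimum-norm solution plus the component of $\Delta \boldsymbol{w}_0$ in $\mathcal{N}(\mathcal{A})$.
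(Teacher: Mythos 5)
Your proposal is correct and follows essentially the same route as the paper's proof: both identify AB-GMRES on the scaled system with GMRES on $C=\mathcal{D}^{-1}\mathcal{A}\mathcal{B}$, establish $\mathcal{R}(C)=\mathcal{R}(\mathcal{D}^{-1}\mathcal{A})$ and $\mathcal{N}(C)=\mathcal{N}(\mathcal{B})$ (the paper via the orthogonal-complement chain $\mathcal{N}(\mathcal{D}^{-1}\mathcal{A}\mathcal{B})=\mathcal{R}(\mathcal{B}^\mathsf{T}\mathcal{A}^\mathsf{T})^\perp=\mathcal{R}(\mathcal{B}^\mathsf{T})^\perp$, you via $\mathcal{R}(\mathcal{A}^\mathsf{T})\cap\mathcal{N}(\mathcal{A})=\{\boldsymbol{0}\}$), and then invoke the same singular-GMRES characterization from Theorem 5.2 of Morikuni and Hayami. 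Your closing caveat --- that the minimum-norm interpretation strictly requires $\Delta\boldsymbol{w}_0\in\mathcal{R}(\mathcal{A}^\mathsf{T})$ rather than arbitrary $\Delta\boldsymbol{w}_0\in\mathbb{R}^n$ --- is a fair observation that the paper's statement and proof leave implicit.
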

\begin{proof}
Since $\mathcal{R}(\mathcal{B}) = \mathcal{R}(\mathcal{A}^\mathsf{T})$ gives $\mathcal{R}(\mathcal{D}^{-1} \! \mathcal{A} \mathcal{B}) = \mathcal{R}(\mathcal{D}^{-1} \! \mathcal{A} \mathcal{A}^\mathsf{T}) = \mathcal{R}(\mathcal{D}^{-1} \! \mathcal{A})$, the equality\break $\min_{\boldsymbol{u} \in \mathbb{R}^m} \| \mathcal{D}^{-1} (\boldsymbol{f} - \mathcal{A} \mathcal{B} \boldsymbol{u}) \|_2 = \min_{\Delta \boldsymbol{w} \in \mathbb{R}^n} \| \mathcal{D}^{-1} (\boldsymbol{f} - \mathcal{A} \Delta \boldsymbol{w}) \|_2$ holds for all $\boldsymbol{f} \in \mathbb{R}^m$ \cite[Theorem 3.1]{HayamiYinIto2010}.
AB-GMRES applied to $\mathcal{D}^{-1} \! \mathcal{A} \Delta \boldsymbol{w} = \mathcal{D}^{-1} \boldsymbol{f}$ determines the $k$th iterate $\Delta \boldsymbol{w}_k$ by minimizing $\| \mathcal{D} (\boldsymbol{f} - \mathcal{A} \Delta \boldsymbol{w}) \|_2$ over the space $\Delta \boldsymbol{w}_0 + \mathcal{K}_k (\mathcal{D}^{-1} \! \mathcal{A} \mathcal{B}, \mathcal{D}^{-1} \boldsymbol{g}_0)$, and thus determines the solution of $\min \| \Delta \boldsymbol{w} \|_2$, subject to $\mathcal{D}^{-1} \! \mathcal{A} \Delta \boldsymbol{w} = \mathcal{D}^{-1} \boldsymbol{f}$ without breakdown for all $\boldsymbol{f} \in \mathcal{R}(\mathcal{A})$ and for all $\Delta \boldsymbol{w}_0 \in \mathbb{R}^n$ if and only if $\mathcal{N}(\mathcal{D}^{-1} \! \mathcal{A} \mathcal{B}) \cap \mathcal{R}(\mathcal{D}^{-1} \! \mathcal{A} \mathcal{B}) = \lbrace \boldsymbol{0} \rbrace$ \cite[Theorem 5.2]{MorikuniHayami2015}, which reduces to $\mathcal{R}(\mathcal{D}^{-1} \! \mathcal{A}) \cap \mathcal{N}(\mathcal{B}) = \lbrace \boldsymbol{0} \rbrace$ from $\mathcal{N}(\mathcal{D}^{-1} \! \mathcal{A} \mathcal{B}) = \mathcal{R}(\mathcal{B}^\mathsf{T} \! \mathcal{A}^\mathsf{T} \! \mathcal{D}^{-\mathsf{T}})^\perp = \mathcal{R}(\mathcal{B}^\mathsf{T} \! \mathcal{A}^\mathsf{T})^\perp = \mathcal{R}(\mathcal{B}^\mathsf{T} \mathcal{B})^\perp = \mathcal{R}(\mathcal{B}^\mathsf{T})^\perp = \mathcal{N}(\mathcal{B})$.
\end{proof}

\begin{theorem}
If $\mathcal{D} \in \mathbb{R}^{m \times m}$ is nonsingular and the inner-iteration matrix is semiconvergent, then AB-GMRES preconditioned by the inner iterations applied to $\mathcal{D}^{-1} \! \mathcal{A} \Delta \boldsymbol{w} = \mathcal{D}^{-1} \boldsymbol{f}$ determines the solution of $\min \| \Delta \boldsymbol{w} \|_2$, subject to $\mathcal{A} \Delta \boldsymbol{w} = \boldsymbol{f}$ without breakdown for all $\boldsymbol{f} \in \mathcal{R}(\mathcal{A})$ and for all $\Delta \boldsymbol{w}_0 \in \mathbb{R}^n$.
\end{theorem}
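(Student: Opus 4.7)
The plan is to reduce the theorem to Lemma~\ref{lm:AB} by taking $\mathcal{B}$ to be the inner-iteration preconditioning matrix $\hat{\mathcal{B}}^{\langle\ell\rangle} = \hat{\mathcal{A}}^\mathsf{T} \hat{C}^{\langle\ell\rangle}$ built from the scaled matrix $\hat{\mathcal{A}} := \mathcal{D}^{-1}\mathcal{A}$, where $\hat{\mathcal{A}}\hat{\mathcal{A}}^\mathsf{T} = \hat{M} - \hat{N}$, $\hat{H} = \hat{M}^{-1}\hat{N}$, and $\hat{C}^{\langle\ell\rangle} = \sum_{i=0}^{\ell-1} \hat{H}^i \hat{M}^{-1}$. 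Two things then must be verified: the range identity $\mathcal{R}(\hat{\mathcal{B}}^{\langle\ell\rangle}) = \mathcal{R}(\mathcal{A}^\mathsf{T})$ assumed in Lemma~\ref{lm:AB}, and the intersection condition $\mathcal{N}(\hat{\mathcal{B}}^{\langle\ell\rangle}) \cap \mathcal{R}(\mathcal{D}^{-1}\mathcal{A}) = \{\boldsymbol{0}\}$ appearing in its equivalent characterization.

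First I would show that semiconvergence of $\hat{H}$ forces $\hat{C}^{\langle\ell\rangle}$ to be nonsingular. Every eigenvalue $\lambda$ of $\hat{H}$ is either $\lambda = 1$ (with semisimple structure) or $|\lambda|<1$; in the former case the corresponding eigenvalue of $\sum_{i=0}^{\ell-1}\hat{H}^i$ is $\ell$, and in the latter it equals $(1-\lambda^\ell)/(1-\lambda)\neq 0$. Hence $\sum_{i=0}^{\ell-1}\hat{H}^i$, and therefore $\hat{C}^{\langle\ell\rangle}$, is invertible, so $\mathcal{R}(\hat{\mathcal{B}}^{\langle\ell\rangle}) = \hat{\mathcal{A}}^\mathsf{T}(\hat{C}^{\langle\ell\rangle}(\mathbb{R}^m)) = \mathcal{R}(\hat{\mathcal{A}}^\mathsf{T}) = \mathcal{R}(\mathcal{A}^\mathsf{T}\mathcal{D}^{-\mathsf{T}}) = \mathcal{R}(\mathcal{A}^\mathsf{T})$ by nonsingularity of $\mathcal{D}$.

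For the intersection condition I would extract a direct-sum decomposition from semiconvergence. From $I - \hat{H} = \hat{M}^{-1}\hat{\mathcal{A}}\hat{\mathcal{A}}^\mathsf{T}$ one gets $\mathcal{N}(I - \hat{H}) = \mathcal{N}(\hat{\mathcal{A}}^\mathsf{T})$ and $\mathcal{R}(I - \hat{H}) = \hat{M}^{-1}\mathcal{R}(\hat{\mathcal{A}})$, and semisimplicity of $1$ as an eigenvalue of $\hat{H}$ yields $\mathbb{R}^m = \mathcal{N}(\hat{\mathcal{A}}^\mathsf{T}) \oplus \hat{M}^{-1}\mathcal{R}(\hat{\mathcal{A}})$. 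For any $\boldsymbol{y} = \hat{\mathcal{A}}\boldsymbol{u}$ in $\mathcal{N}(\hat{\mathcal{B}}^{\langle\ell\rangle}) \cap \mathcal{R}(\hat{\mathcal{A}})$ I would multiply the relation $\hat{\mathcal{A}}^\mathsf{T}\hat{C}^{\langle\ell\rangle}\hat{\mathcal{A}}\boldsymbol{u} = \boldsymbol{0}$ on the left by $\hat{\mathcal{A}}$ and use the identity $\hat{\mathcal{A}}\hat{\mathcal{A}}^\mathsf{T}\hat{C}^{\langle\ell\rangle} = \hat{M}(I - \hat{H}^\ell)\hat{M}^{-1}$ to deduce $(I - \hat{H}^\ell)\hat{M}^{-1}\hat{\mathcal{A}}\boldsymbol{u} = \boldsymbol{0}$. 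Invoking the semiconvergence identity $\mathcal{N}(I - \hat{H}^\ell) = \mathcal{N}(I - \hat{H}) = \mathcal{N}(\hat{\mathcal{A}}^\mathsf{T})$, the vector $\hat{M}^{-1}\hat{\mathcal{A}}\boldsymbol{u}$ lies in $\mathcal{N}(\hat{\mathcal{A}}^\mathsf{T}) \cap \hat{M}^{-1}\mathcal{R}(\hat{\mathcal{A}}) = \{\boldsymbol{0}\}$, giving $\boldsymbol{y} = \hat{\mathcal{A}}\boldsymbol{u} = \boldsymbol{0}$. This verifies the intersection condition, and Lemma~\ref{lm:AB} then closes the proof.

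The main obstacle will be the intersection condition in its second paragraph: the chain from $\hat{\mathcal{A}}^\mathsf{T}\hat{C}^{\langle\ell\rangle}\hat{\mathcal{A}}\boldsymbol{u} = \boldsymbol{0}$ down to $\hat{\mathcal{A}}\boldsymbol{u} = \boldsymbol{0}$ must pass through the two semiconvergence-driven facts $\mathcal{N}(I - \hat{H}^\ell) = \mathcal{N}(I - \hat{H})$ and $\mathbb{R}^m = \mathcal{N}(I - \hat{H}) \oplus \mathcal{R}(I - \hat{H})$. Both rely only on semisimplicity of the eigenvalue $1$ of $\hat{H}$ and must be argued without assuming any extra symmetry or definiteness of $\hat{M}$, since the theorem's hypothesis is merely nonsingularity of $\mathcal{D}$ together with semiconvergence of the inner-iteration matrix.
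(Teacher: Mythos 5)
Your proof is correct and takes essentially the same route as the paper: reduce to Lemma~\ref{lm:AB} with $\mathcal{B}=\mathcal{A}^\mathsf{T}C^{\langle\ell\rangle}\mathcal{D}=\hat{\mathcal{A}}^\mathsf{T}\hat{C}^{\langle\ell\rangle}$ and verify the range identity $\mathcal{R}(\mathcal{B})=\mathcal{R}(\mathcal{A}^\mathsf{T})$ together with the intersection condition. The only difference is that where the paper cites Lemmas 4.3--4.5 of Morikuni and Hayami for these two facts (after observing that the scaled inner-iteration matrix is the similarity transform $\mathcal{D}^\mathsf{T}H\mathcal{D}^{-\mathsf{T}}$), you prove them inline from the spectral consequences of semiconvergence --- invertibility of $\sum_{i=0}^{\ell-1}\hat{H}^i$ and the splitting $\mathbb{R}^m=\mathcal{N}(I-\hat{H})\oplus\mathcal{R}(I-\hat{H})$ --- which makes the argument self-contained but is not a genuinely different approach.
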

\begin{proof}
From Lemma \ref{lm:AB}, it is sufficient to show that $\mathcal{R}(\mathcal{B}) =\mathcal{R}(\mathcal{A}^\mathsf{T})$ and \break $\mathcal{N}(\mathcal{D}^{-1} \! \mathcal{A} \mathcal{B}) \cap \mathcal{R}(\mathcal{D}^{-1} \! \mathcal{A} \mathcal{B}) = \lbrace \boldsymbol{0} \rbrace$.
Since $\mathcal{D}^{-1} \! M \mathcal{D}^{-\mathsf{T}} = \mathcal{D}^{-1} (\mathcal{A} \mathcal{A}^\mathsf{T} - N) \mathcal{D}^{-\mathsf{T}}$ is the splitting matrix of $\mathcal{D}^{-1}  \mathcal{A} \mathcal{A}^\mathsf{T} \mathcal{D}^{-\mathsf{T}}$ for the inner iterations, the inner-iteration matrix is $\mathcal{D}^\mathsf{T} \! H \mathcal{D}^{-\mathsf{T}}$.
Hence, the inner-iteration preconditioning matrix $\mathcal{B} = \mathcal{A}^\mathsf{T} C^{\langle\ell\rangle} \mathcal{D}$ satisfies $\mathcal{R}(\mathcal{B}) = \mathcal{R}(\mathcal{A}^\mathsf{T})$ \cite[Lemma 4.5]{MorikuniHayami2015}.
On the other hand, $\mathcal{D}^{-1} \! \mathcal{A} \mathcal{B} = \mathcal{D}^{-1} \! M (I - H^{\ell}) (\mathcal{D}^{-1} \! M)^{-1}$ satisfies $\mathcal{N}(\mathcal{D}^{-1} \! \mathcal{A} \mathcal{B}) \cap \mathcal{R}(\mathcal{D}^{-1} \! \mathcal{A} \mathcal{B}) = \lbrace \boldsymbol{0} \rbrace$ \cite[Lemmas 4.3, 4.4]{MorikuniHayami2015}.
\end{proof}

\section{Numerical experiments}\label{sec:lp_NumericalExperiments}
In this section, we compare the performance of the interior-point method based on the iterative solvers with the standard interior-point programs.  
We also developed an efficient direct solver coded in C to compare with the iterative solvers.  
For the sake of completeness, we briefly describe our direct solver first.
\subsection{Direct solver for the normal equations}
To deal with the rank-deficiency,  
we used a strategy that is similar to the Cholesky-Infinity modification scheme introduced in the LIPSOL solver \cite{Zhang1998}.
However, instead of penalizing the elements that are close to zero, we removed them and solved the reduced system.
We implemented this modification by an LDLT decomposition. 
We used the \textsc{Matlab} built-in function {\texttt{chol}} to detect whether the matrix is symmetric positive definite.
We used the {\texttt{ldlchol}} from \textsc{CSparse} package version 3.1.0 \cite{Davis2014} when the matrix was symmetric positive definite, and we turned to the \textsc{Matlab} built-in solver {\texttt{ldl}} for the semidefinite cases which uses MA57 \cite{Duff2004}.

We explain the implementation by an example where $\mathcal{A}\mathcal{A}^\mathsf{T} \in \mathbb{R}^{3\times 3}$. 
For matrix $\mathcal{A}\mathcal{A}^\mathsf{T}$, LDLT decomposition gives
\begin{equation*}
\mathcal{A}\mathcal{A}^\mathsf{T} = L G L^\mathsf{T}
=\begin{bmatrix} 1 & 0 & 0\\ l_{21} & 1 & 0 \\ l_{31} & l_{32} & 1 \end{bmatrix} \begin{bmatrix}  g_1 & 0 &0 \\0 &g_2 & 0\\0&0& g_3 \end{bmatrix} \begin{bmatrix} 1 & l_{21} &  l_{31}\\0 & 1 &  l_{32} \\ 0 & 0 & 1\end{bmatrix}. 
\end{equation*}
Correspondingly, we partition 
$\Delta\boldsymbol{y}= [ \Delta y_1,\Delta y_2,\Delta y_3 ]^\mathsf{T}$ and $\boldsymbol{f}=[ f_1, f_2, f_3]^\mathsf{T}$.
Assuming that the diagonal element $g_2$ is close to zero, we let
$\tilde{L}:= \bigl[\begin{smallmatrix} 1 & 0\\l_{31} & 1 \end{smallmatrix}\bigr]$, $\tilde{G}:= \bigl[\begin{smallmatrix} g_1& 0\\0 & g_3 \end{smallmatrix}\bigr]$, $\tilde{\boldsymbol{f}} = [f_1, f_3]^\mathsf{T}$, $\tilde{\Delta\boldsymbol{y}} = [\Delta y_1, \Delta y_3]^\mathsf{T}$,
and solve
\begin{equation*}
\tilde{L}\tilde{G}^{1/2}\left( (\tilde{L}\tilde{G}^{1/2})^\mathsf{T} \tilde{\Delta\boldsymbol{y}}\right) = \tilde{\boldsymbol{f}},
\end{equation*}
using forward and backward substitutions. 
The solution is then given by $\Delta\boldsymbol{y} = [\Delta y_1, 0, \Delta y_3]^\mathsf{T}$.
\FloatBarrier
\subsection{Implementation specifications}
In this section, we describe our numerical experiments. 

The initial solution for the interior-point method was set using the method described in LIPSOL solver \cite{Zhang1998}.
The initial solution for the Krylov subspace iterations and the inner iterations was set to zero. 

We set the maximum number of the interior-point iterations as $99$ and the stopping criterion regarding the error measure as 
\begin{equation}\label{eq:errorMeasure}
\Gamma^{(k)}\leq \epsilon_\mathrm{out} = 10^{-8},
\end{equation}
where $\Gamma^{(k)}$ is defined by \eqref{eq:errorMeasureDef}.

For the iterative solver for the linear system \eqref{eq:minnrmprob}, 
we set the maximum number of iterations for CGNE, MRNE and AB-GMRES as $m$, and relaxed it to $40,000$ for some difficult problems for CGNE and MRNE. 
We set the stopping criterion for the scaled residual as
\begin{equation*}
\| \hat{\boldsymbol{f}}-\hat{\mathcal{A}}\Delta\boldsymbol{w}^{(k)}\|_2 \leq \epsilon_\mathrm{in}\|\hat{\boldsymbol{f}}\|_2,
\end{equation*}
where $\epsilon_\mathrm{in} $ is initially $10^{-6}$ and is kept in the range $[10^{-14}, 10^{-4}]$ during the process. 
We adjusted $\epsilon_\mathrm{in} $ according to the progress of the interior-point iterations. 
We truncated the iterative solving prematurely in the early interior-point iterations, and pursued a more precise direction as the LP solution was approached. 
The progress was measured by the error measure $\Gamma^{(k)}$. Concretely, we adjusted $\epsilon_\mathrm{in} $ as 
\begin{equation*}
 \epsilon_\mathrm{in}^{(k)} = 
 \begin{cases}   \epsilon_\mathrm{in}^{(k-1)} \times 0.75 & \boldsymbol{if } \log_{10}{\Gamma^{(k)}}\in (-3, 1],\\
  \epsilon_\mathrm{in}^{(k-1)} \times 0.375 & \boldsymbol{if } \log_{10}{\Gamma^{(k)}} \in (-\infty,-3].\end{cases}
\end{equation*}
For steps where iterative solvers failed to converge within the maximum number of iterations, we adopted the iterative solution with the minimum residual norm and slightly increased the value of $\epsilon_\mathrm{in}$ by multiplying by $1.5$ which would be used in the next interior-point step.

Note that preliminary experiments were conducted with the tolerance being fixed for all the problems. 
However, further experiments showed that adjusting the parameter $\epsilon_\mathrm{in}$ with the progress towards an optimal solution worked better. This is also another advantage of using iterative solvers rather than direct solvers.

We adopt the implementation of AB-GMRES preconditioned by NE-SOR inner-iterations \cite{MorikuniHayamiCodesABMEX} with the additional row-scaling scheme (Section \ref{sec:rowscaling}).
No restarts were used for the AB-GMRES method.
The non-breakdown conditions discussed in Sections \ref{sec:CGNEMRNE}, \ref{sec:ABGMRES} are satisfied.

For the direct solver, the tolerance for dropping pivot elements close to zero was $10^{-16}$ for most of the problems; for some problems this tolerance has to be increased to $10^{-6}$ to overcome breakdown. 

The experiment was conducted on a MacBook Pro with a 2.6 GHz Intel Core i5 processor with 8 GB of random-access memory, OS X El Capitan version 10.11.2. 
The interior-point method was coded in \textsc{Matlab} R2014b and the iterative solvers including AB-GMRES (NE-SOR), CGNE (NE-SSOR), and MRNE (NE-SSOR), were coded in C and compiled as \textsc{Matlab} Executable (MEX) files accelerated with Basic Linear Algebra Subprograms (BLAS).

We compared our implementation with PDCO version 2013 \cite{saunders2002pdco} and three solvers available in \textsc{CVX} \cite{GrantBoyd2014,GrantBoyd2008}: SDPT3 version 4.0 \cite{TohToddTutuncu1999,TutuncuTohTodd2003}, SeDuMi version 1.34 \cite{TohToddTutuncu1999} and MOSEK version 7.1.0.12 \cite{mosek},
with the default interior-point stopping criterion \eqref{eq:errorMeasure}. 
Note that SDPT3, SeDuMi, and PDCO are non-commercial public domain solvers, whereas MOSEK is a commercial solver known as one of the state-of-the-art solvers. 
% PDCO was tested using its \textsc{Matlab} interface. 
PDCO provides several choices for the solvers for the interior-point steps, among which we chose the direct (Cholesky) method and the LSMR method. Although MINRES solver is another iterative solver available in PDCO, its homepage \cite{saunders2002pdco} suggests that LSMR performs better in general. Thus, we tested with LSMR. 
For PDCO parameters, we chose to suppress scaling for the original problem.
The other solvers were implemented with the CVX \textsc{Matlab} interface, and we recorded the CPU time reported in the screen output of each solver. 
However, it usually took a longer time for the CVX to finish the whole process. 
The larger the problem was, the more apparent this extra CPU time became. For example, for problem {\texttt ken\_\!\! 18}, the screen output of SeDuMi was 765.3 seconds while the total processing time was 7,615.2 seconds.

We tested on two classes of LP problems: 127 typical problems from the benchmark libraries and 13 problems arising from basis pursuit. The results are described in Section \ref{sec:standard_lp} and Section \ref{sec:bp_lp}, respectively.

\FloatBarrier
\subsection{Typical LP problems: sparse and ill-conditioned problems}\label{sec:standard_lp}
We tested 127 typical LP problems from the \textsc{Netlib}, \textsc{Qaplib} and \textsc{Mittelmann} collections in \cite{DavisHu2011}. 
Most of the problems have sparse and full-rank constraint matrix $A$ (except problems {\texttt{bore3d}} and {\texttt{cycle}}).
For the problems with $\boldsymbol{l}\leq\boldsymbol{x}\leq\boldsymbol{u},\;\boldsymbol{l}\neq\boldsymbol{0},\;\boldsymbol{u}\neq\boldsymbol{\infty}$, we transform them using the approach in LIPSOL~\cite{Zhang1998}.

The overall summary of numerical experiments on the 127 typical problems is given in Table \ref{tab:overall}.
The counts in column ``Failed'' include the case where a problem was solved at a relaxed tolerance (phrased as ``inaccurately solved'' in CVX). Column ``Expensive'' refers to the case where the interior-point iterations took more than a time limit of 20 hours.

\begin{table}[!bp]
\footnotesize
\renewcommand{\arraystretch}{1.2}
\caption{Overall performance of the solvers on $127$ testing problems.}
\begin{center}
\begin{tabular}{@{\extracolsep{\fill}}rrrr@{\extracolsep{\fill}}}
\hline
Status & Solved & Failed & Expensive\\ \hline
AB-GMRES (NE-SOR) & 123   & 2 & 2 \\
CGNE (NE-SSOR) &124 &3  &0\\
MRNE (NE-SSOR) &125 &2  &0\\
Modified Cholesky &117 &10 &0\\
SDPT3 & 76 & 46  & 5\\
SeDuMi & 104 & 23 & 0 \\
MOSEK & 127 & 0 & 0\\
PDCO (Direct) & 110  &17 & 0 \\
PDCO (LSMR) & 88  & 35 & 4\\
\hline 
\end{tabular}\label{tab:overall}
\end{center}
\end{table}

MOSEK was most stable in the sense that it solved all 127 problems, and MRNE (NE-SSOR) came next with only two failures with the {\textsc{Netlib}} problems {\texttt{greanbea}} and {\texttt{greanbeb}}.
CGNE (NE-SSOR) method solved almost all the problems that MRNE (NE-SSOR) solved, except for the 
largest \textsc{Qaplib} problem, which was solved to a slightly larger tolerance level of $10^{-7}$.
AB-GMRES (NE-SOR) was also very stable and solved the problems accurately enough. However, it took longer than 20 hours for two problems
that have 105,127 and 16,675 equations,
% that have 154,699 and 23,541 unknowns, 
respectively, although it succeeded in solving larger problems such as {\texttt{pds-80}}.
The other solvers were less stable.
The modified Cholesky solver and PDCO (Direct) solved $92\%$ and $87\%$ of the problems, respectively,
although they were faster than the other solvers for the problems that they could successfully solve.
PDCO (LSMR) solved $69\%$ problems and was slower than the proposed solvers. The reason could be that it does not use preconditioners.
SDPT3 solved $60\%$ and SeDuMi $82\%$ of the problems.
Here we should mention that SeDuMi and SDPT3 are designed for LP, SDP, and SOCP, while our code is (currently) tuned solely for LP.

Note that MOSEK solver uses a multi-corrector interior-point method \cite{Gondzio1996} while our implementation is a single corrector (i.e., predictor-corrector) method.
This led to different numbers of interior-point iterations as shown in the tables.
Thus, there is still room for improvement in the efficiency of our solver based on iterative solvers if a more elaborately tuned interior-point framework such as the one in MOSEK is adopted.

\begin{figure}[!bt]
   \subfloat[\textsc{Netlib} problems.]{\includegraphics[width=.47\textwidth]{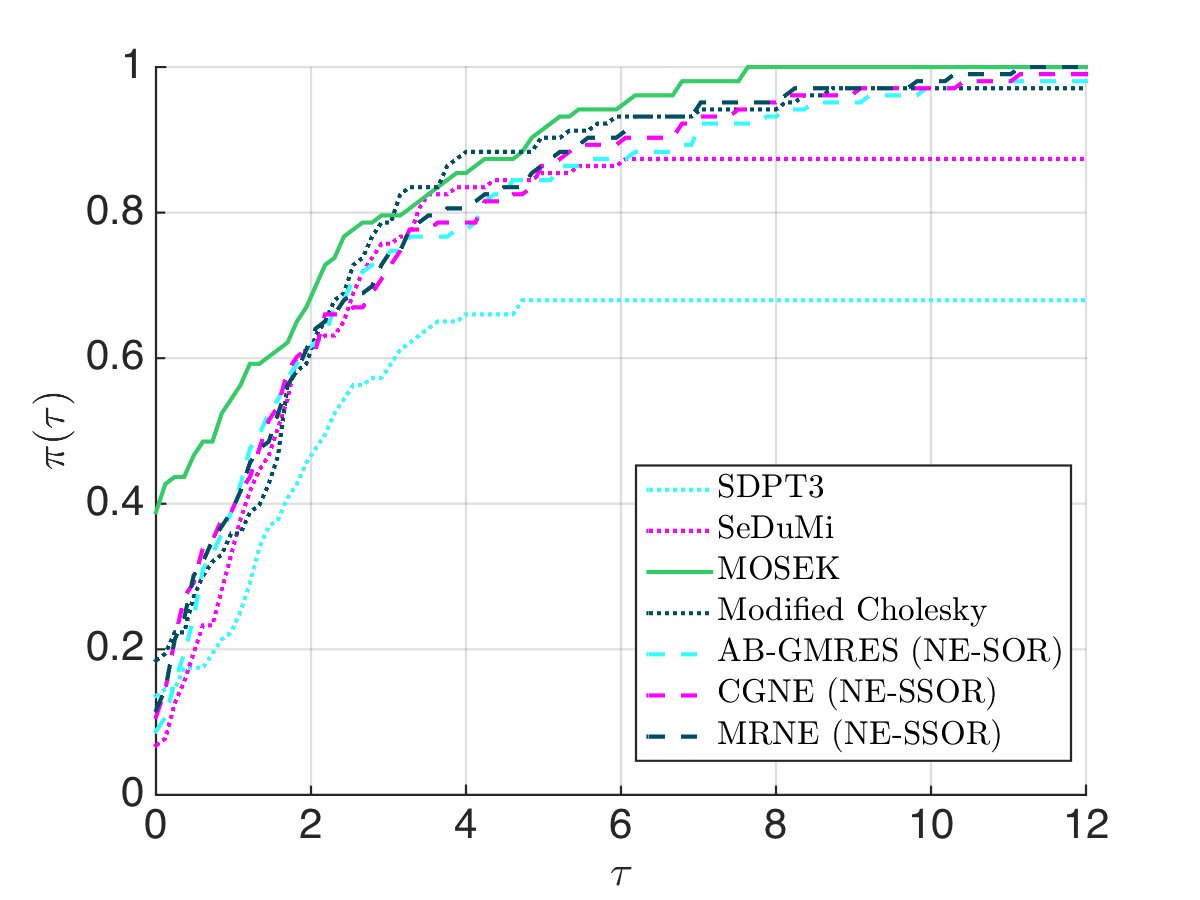}\label{fig:profile_netlib}}\hfill
   \subfloat[\textsc{Qaplib} problems.]{\includegraphics[width=.47\textwidth]{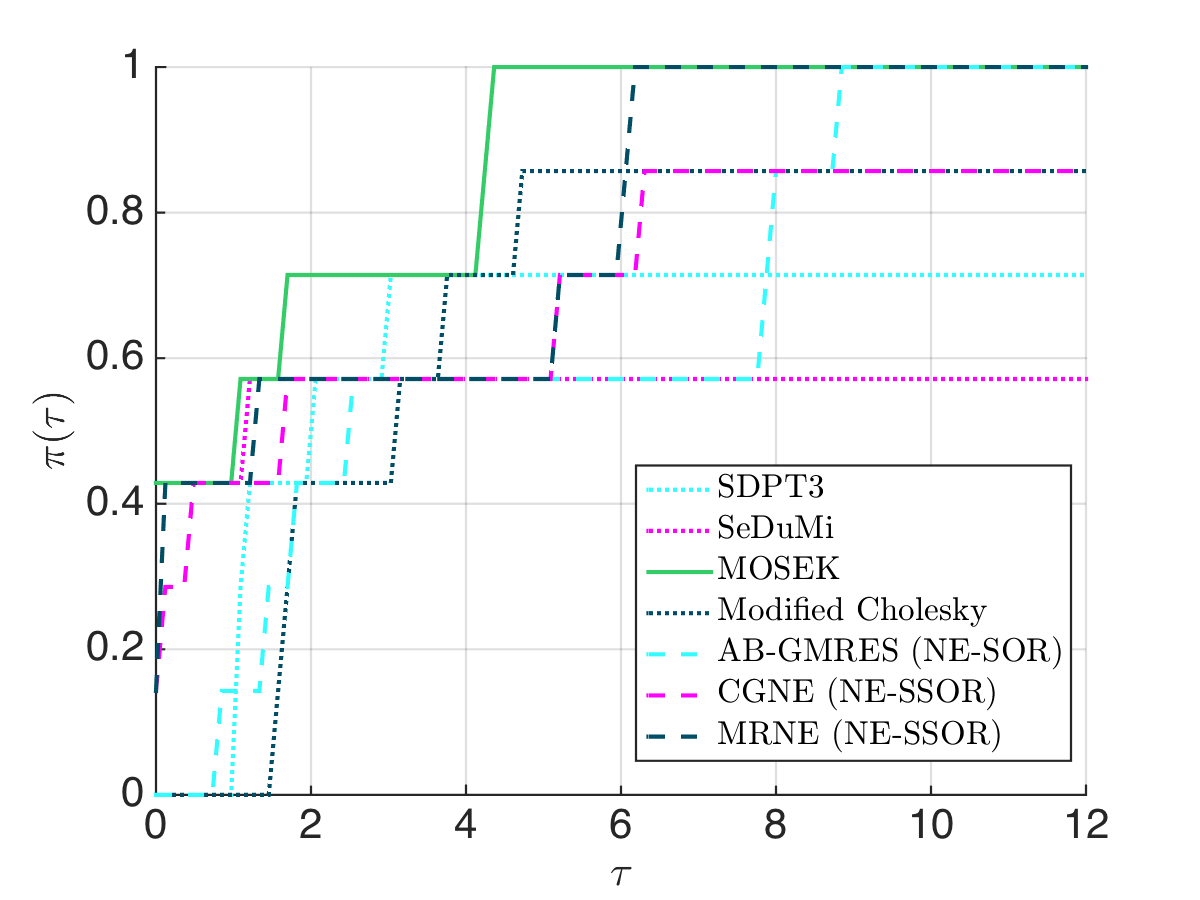}\label{fig:profile_qaplib}}\\
   \subfloat[\textsc{Mittelmann} problems.]{\includegraphics[width=.47\textwidth]{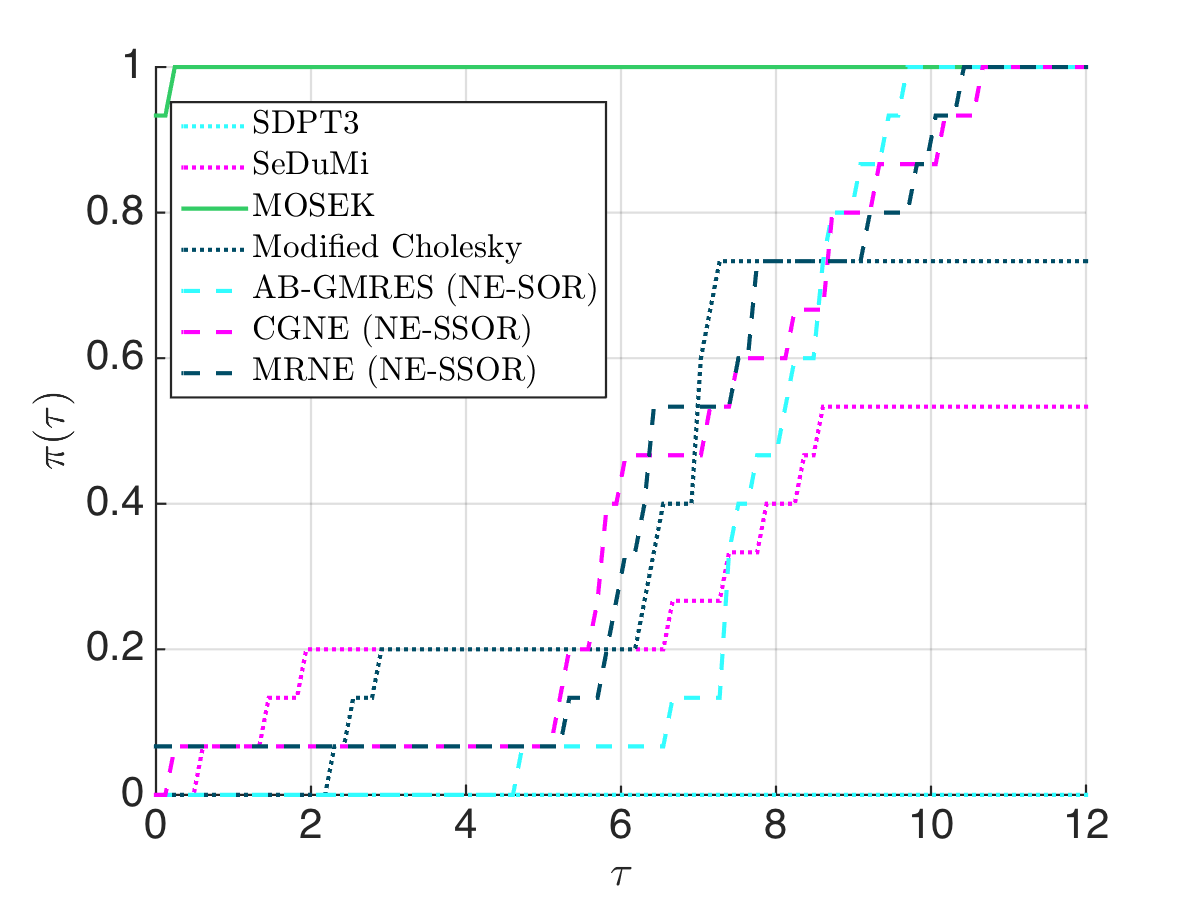}\label{fig:profile_mittelmann}}\hfill
    \subfloat[All the problems.]{\includegraphics[width=.47\textwidth]{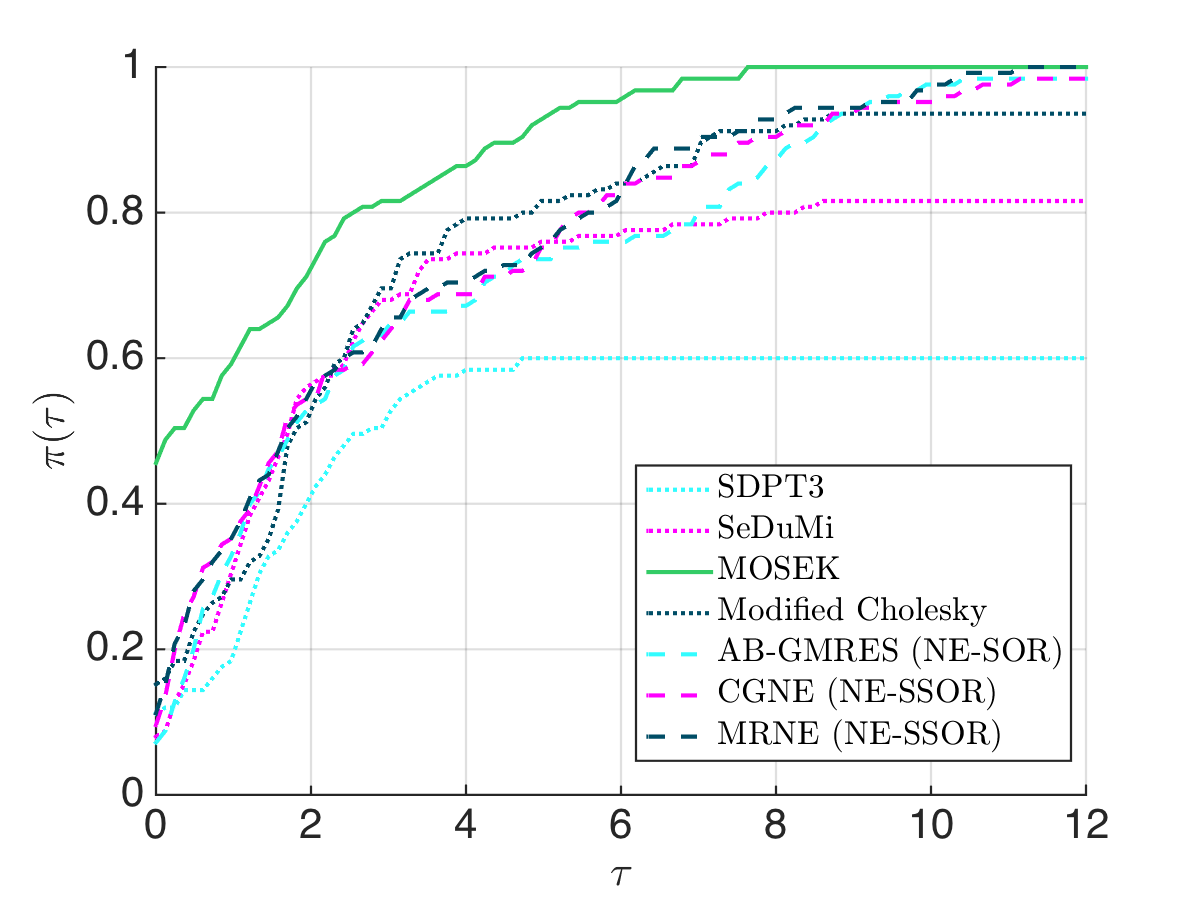}\label{fig:profile_all}}
    \caption{Dolan-Mor{\'e} profiles comparing the CPU time costs for the proposed solvers, public domain and commercial solvers.\label{fig:profile}}
\end{figure}
\begin{figure}[!bt]
    \subfloat[\textsc{Netlib} problems.]{\includegraphics[width=.47\textwidth]{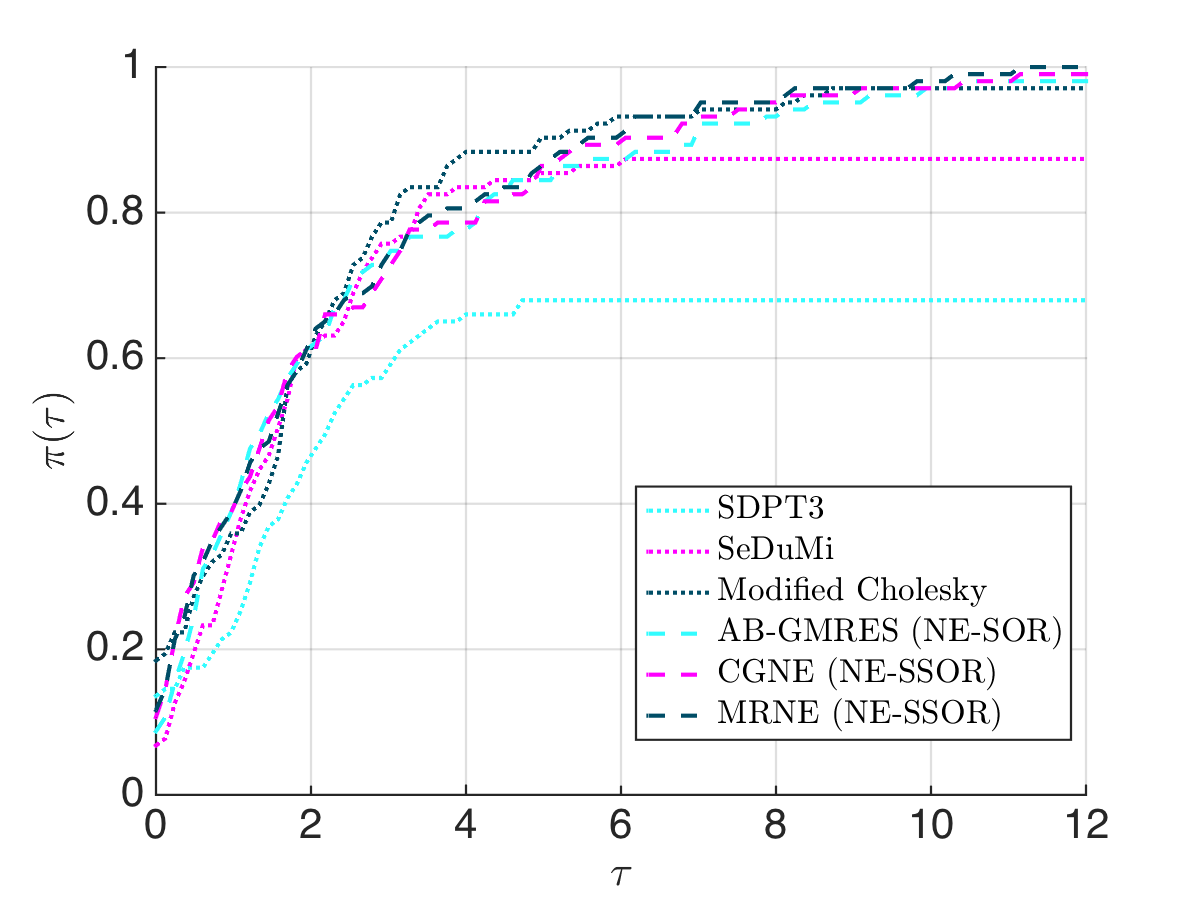}\label{fig:profile_netlib_noMosek}}\hfill
    \subfloat[\textsc{Qaplib} problems.]{\includegraphics[width=.47\textwidth]{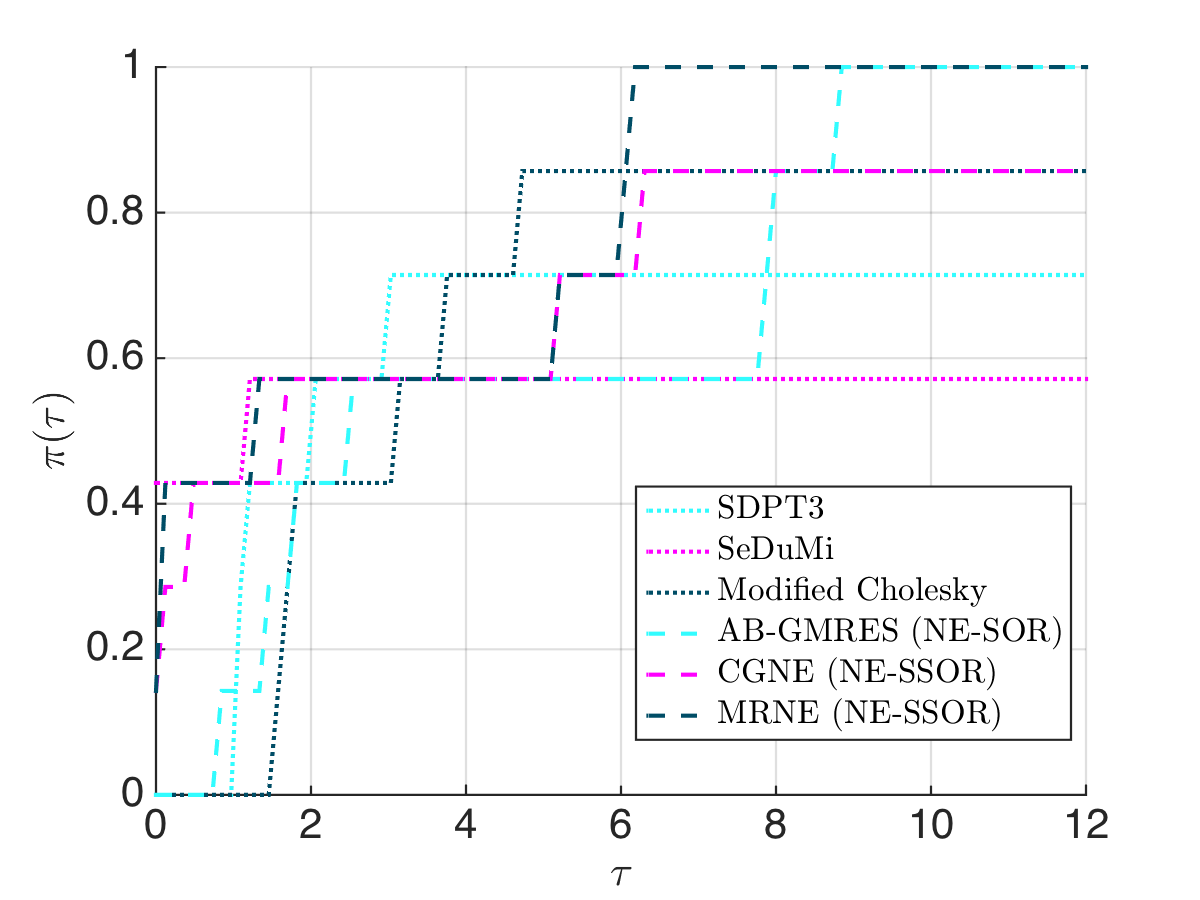}\label{fig:profile_qaplib_noMosek}}\\
    \subfloat[\textsc{Mittelmann} problems.]{\includegraphics[width=.47\textwidth]{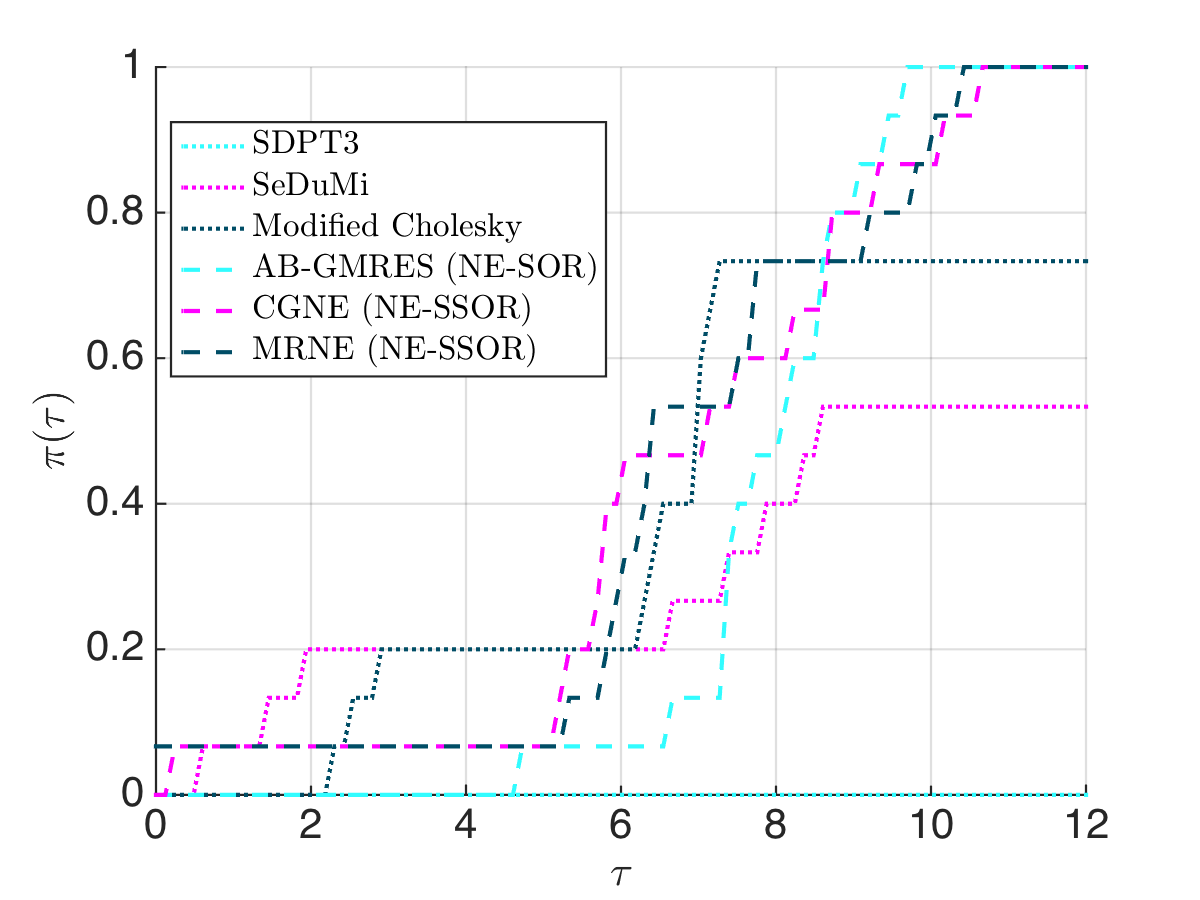}\label{fig:profile_mittelmann_noMosek}}\hfill
    \subfloat[All the problems.]{\includegraphics[width=.47\textwidth]{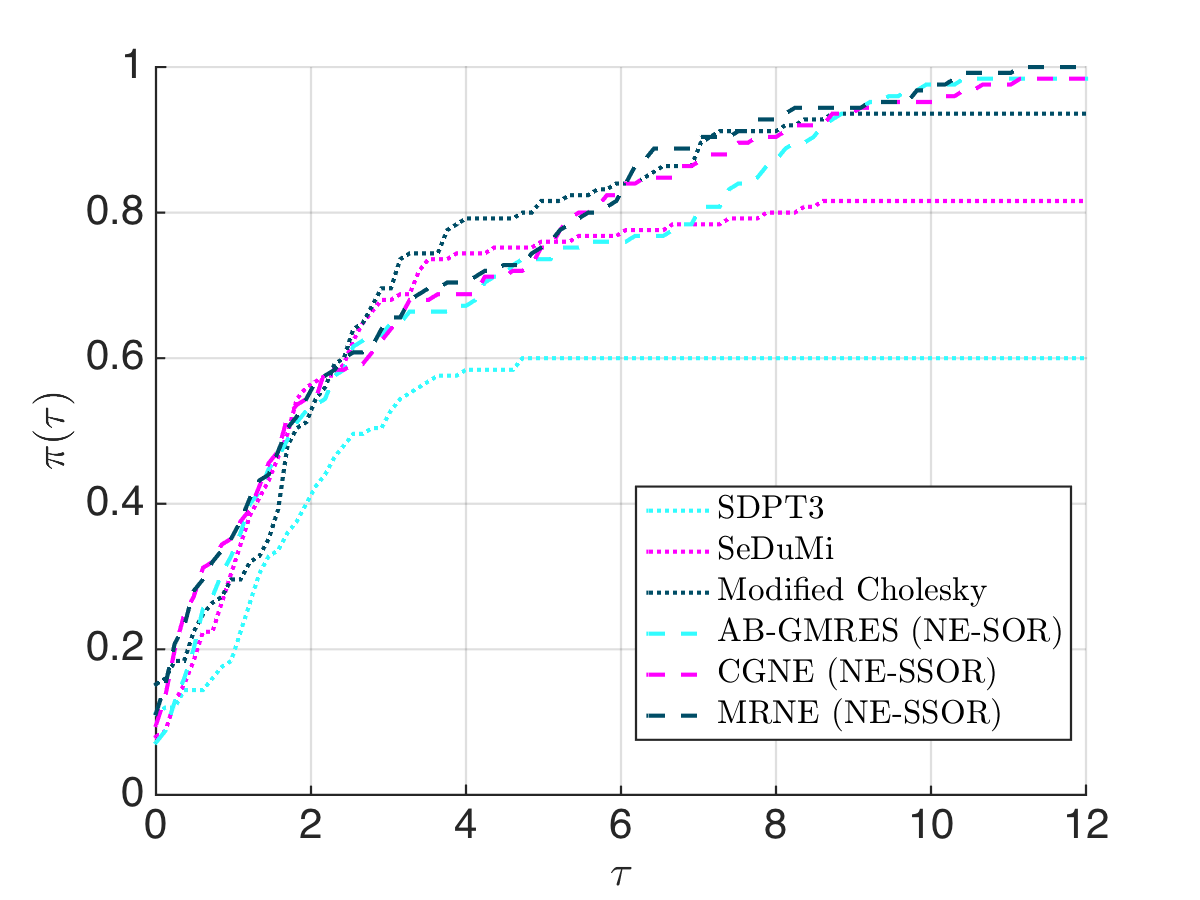}}
\caption{Dolan-Mor{\'e} profiles comparing the CPU time costs for the proposed solvers and public domain solvers.\label{fig:profile_noMosek}}
\end{figure}

In order to show the trends of performance, we use the Dolan-Mor{\'e} performance profiles \cite{DolanMore2002} in Figures \ref{fig:profile} and \ref{fig:profile_noMosek}, with $\pi(\tau):=P(\log_2 r_{ps}\leq\tau)$ the proportion of problems for which $\log_2$-scaled performance ratio is at most $\tau$, where $r_{ps}:=t_{ps}/t^*_\mathrm{p}$, $t_{ps}$ is the CPU time for solver $s$ to solve problem $p$, and $t^*_\mathrm{p}$ is the minimal CPU time for problem $p$.
Figure \ref{fig:profile} includes the commercial solver MOSEK while Figure \ref{fig:profile_noMosek} does not. 
Note that the generation of Figure \ref{fig:profile_noMosek} is not by simply removing the curve of MOSEK from Figure \ref{fig:profile}, but rather removing the profile of MOSEK from the comparison dataset and thus changing the minimum CPU time cost for each problem.
The comparison indicates that the iterative solvers, although slower than the commercial solver MOSEK in some cases, were often able to solve the problems to the designated accuracy.

In Tables \ref{tab:netlib1}, \ref{tab:qaplib}, and \ref{tab:mittle}, we give the following information:
\begin{enumerate}
\item the name of the problem and the size $(m, n)$ of the constraint matrix,
\item the number of interior-point iterations required for convergence,
\item CPU time for the entire computation in seconds. For the cases shorter than $3,000$ seconds, CPU time is taken as an average over 10 measurements. 
In each row, we indicate in red boldface and blue underline the fastest and second fastest solvers in CPU time, respectively.
\end{enumerate}

% \FloatBarrier

Besides the statistics, we also use the following notation:
\begin{itemize}
\item[\dag] inaccurately solved, i.e., the value of $\epsilon_\mathrm{out}$ was relaxed to a larger level. 
In the column ``Iter'', we provide extra information \dag$_a$ at the stopping point: 
for our solvers, $a=\floor{\log_{10}\Gamma^{(k)}}$,
where $\floor{\cdot}$ is the floor function; for CVX solvers, $a=\floor{\log_{10}\mu}$ as provided in the CVX output; PDCO solvers do not provide this information, thus they are not given;
\item[$\texttt{f}$] the interior-point iterations diverged;
\item[$\texttt{t}$] the iterations took longer than 20 hours.
\end{itemize}

Note that all zero rows and columns of the constraint matrix $A$ were removed beforehand.
The problems marked with ${\#}$ are with rank-deficient $A$ even after this preprocessing. 
For these problems we put $\mathrm{rank}(A)$ in brackets after $m$, which is computed using the \textsc{Matlab} function $\texttt{sprank}$.

In order to give an idea of the typical differences between methods, we present the interior-point convergence curves for problem {\texttt{ken\_\!\! 13}}.
The problem has a constraint matrix $A\in\mathbb{R}^{28,632\times 42,659}$ with full row rank and $97,246$ nonzero elements.

Different aspects of the performance of the four solvers are displayed in Figure \ref{fig:ken13}.
The red dotted line with diamond markers represents the quantity related to AB-GMRES (NE-SOR), the blue with downward-pointing triangle CGNE (NE-SSOR), the yellow with asterisk MRNE (NE-SSOR), and the dark green with plus sign the modified Cholesky solver. 
Note that for this problem {\texttt{ken\_\!\! 13}}, the modified Cholesky solver became numerically inaccurate at the last step and it broke down if the default dropping tolerance was used. Thus, we increased it to $10^{-6}$.  

Figure \ref{fig:cond} shows $\kappa(\mathcal{A}\mathcal{A}^\mathsf{T})$ in $\log_{10}$ scale. 
It verifies the claim that the least squares problem becomes increasingly ill-conditioned at the final steps in the interior-point process: $\kappa(\mathcal{A}\mathcal{A}^\mathsf{T})$ started from around $10^{20}$ and increased to $10^{80}$ at the last 3-5 steps.
Figure \ref{fig:mu} shows the convergence curve of the duality measure $\mu$ in $\log_{10}$ scale. 
The $\mu$ drops below the tolerance and the stopping criterion is satisfied.
Although it is not shown in the figure, we found that the interior-point method with modified
Cholesky with the default value of the dropping tolerance $10^{-16}$ stagnated for $\mu \simeq 10^{-4}$.
Comparing with Figure \ref{fig:cond}, it is observed that the solvers started to behave differently as $\kappa(\mathcal{A}\mathcal{A}^\mathsf{T})$ increased sharply. 

\afterpage{
\vspace{+1cm}
\begin{landscape}
\begin{table}[htbp]
\centering
\footnotesize
\renewcommand\tabcolsep{3pt}
\caption{Experiments on \textsc{Netlib} problems.
In each row, red boldface and blue underline denote the fastest and second fastest solvers in CPU time, respectively.}
\renewcommand{\arraystretch}{1.1}
% [inline block 0: 6 envs, 61741 chars -> data_tex | \begin{tabular}{@{}rrrrrrrrrrrrrrrrrrrrr@{}}  \hline ...]
\label{tab:mittle}
\end{table}
\end{landscape}
}

\begin{figure}[!htbp]
    \subfloat[Condition number $\kappa(\mathcal{A}\mathcal{A}^\mathsf{T})$.]{\includegraphics[width=.47\textwidth]{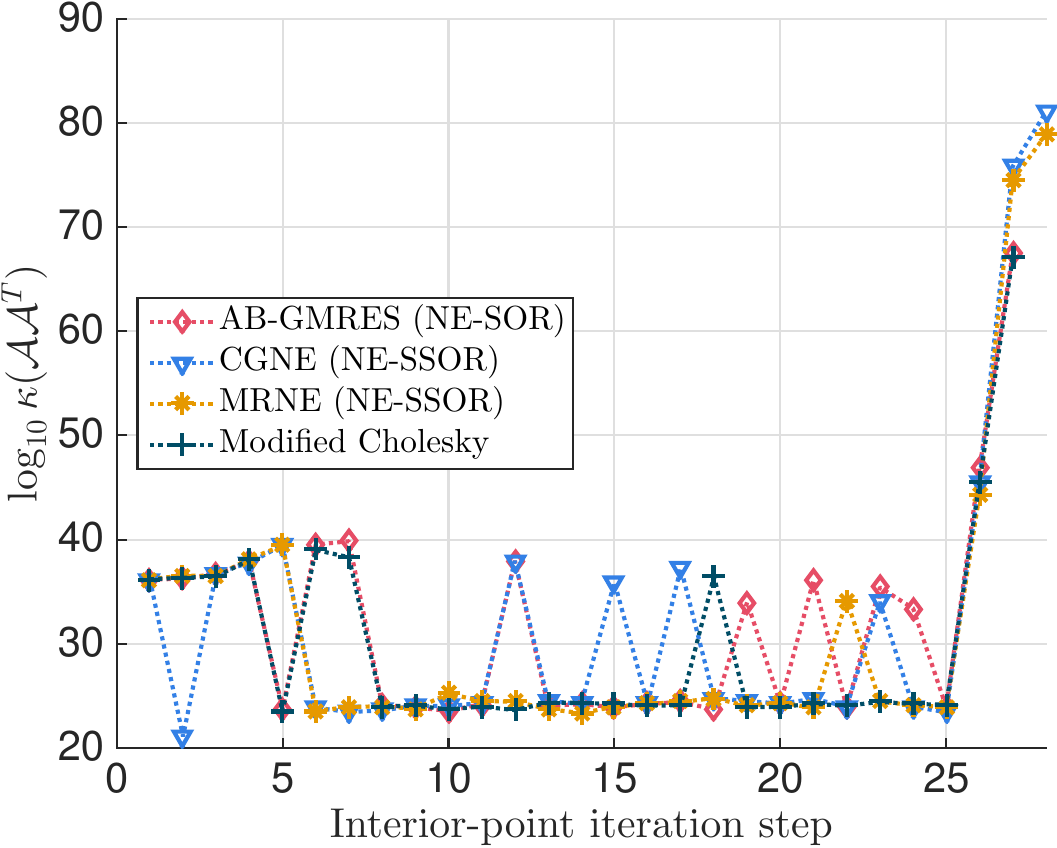}\label{fig:cond}}\hfill
    \subfloat[Duality measure $\mu$.]{\includegraphics[width=.47\textwidth]{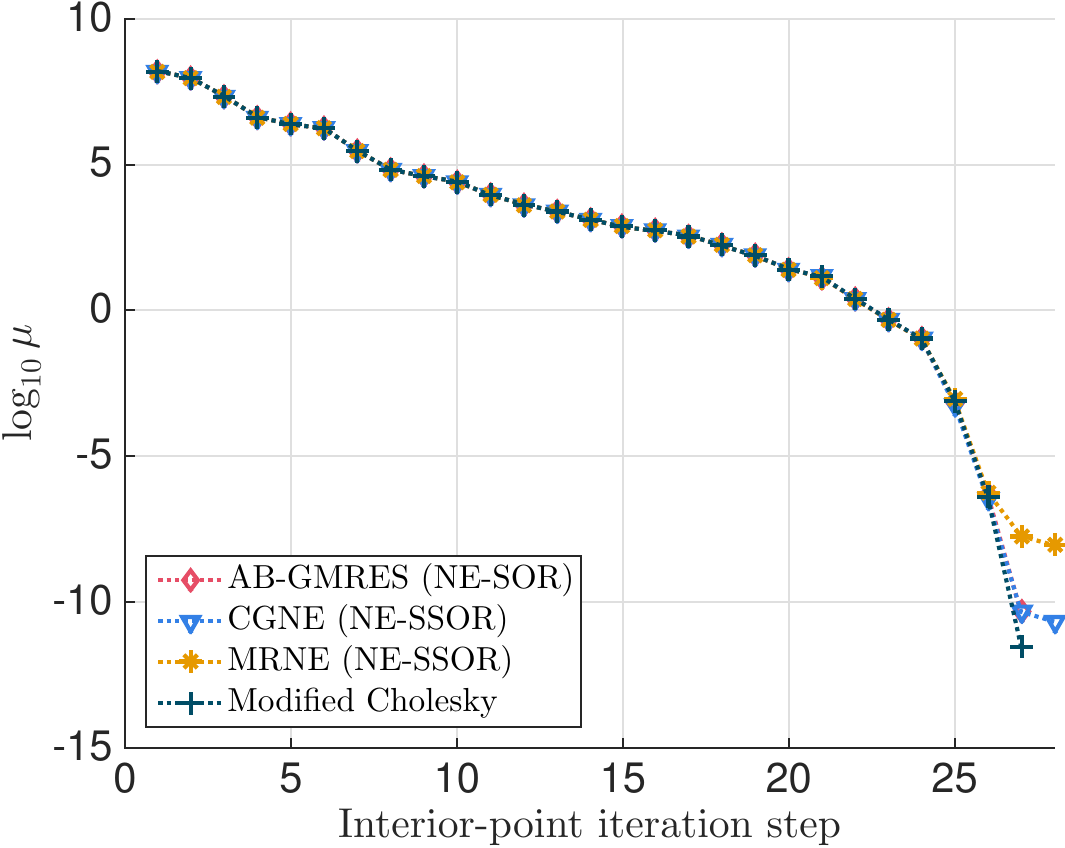}\label{fig:mu}}\\
    \subfloat[Relative residuals for predictor stage.]{\includegraphics[width=.47\textwidth]{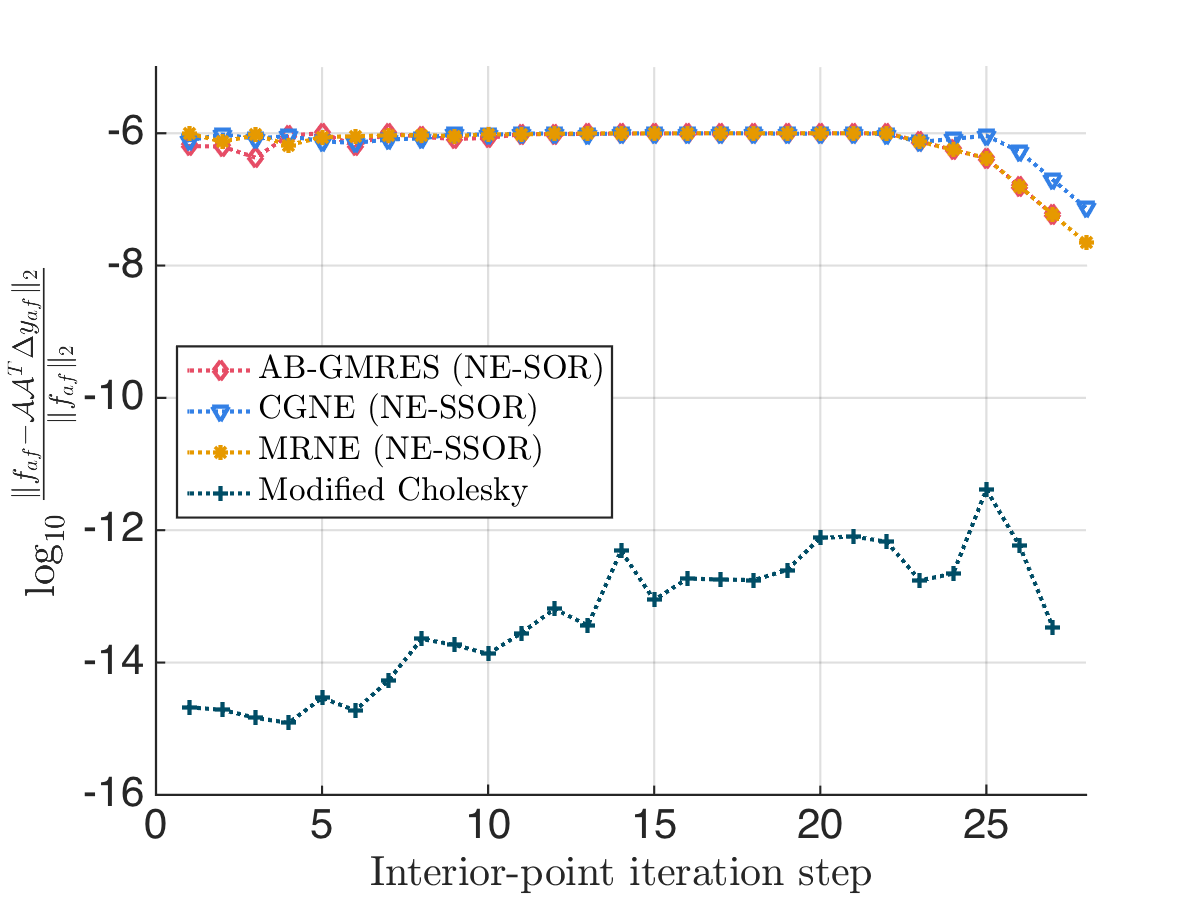}\label{fig:resmid1}}\hfill
    \subfloat[Relative residuals for corrector stage.]{\includegraphics[width=.47\textwidth]{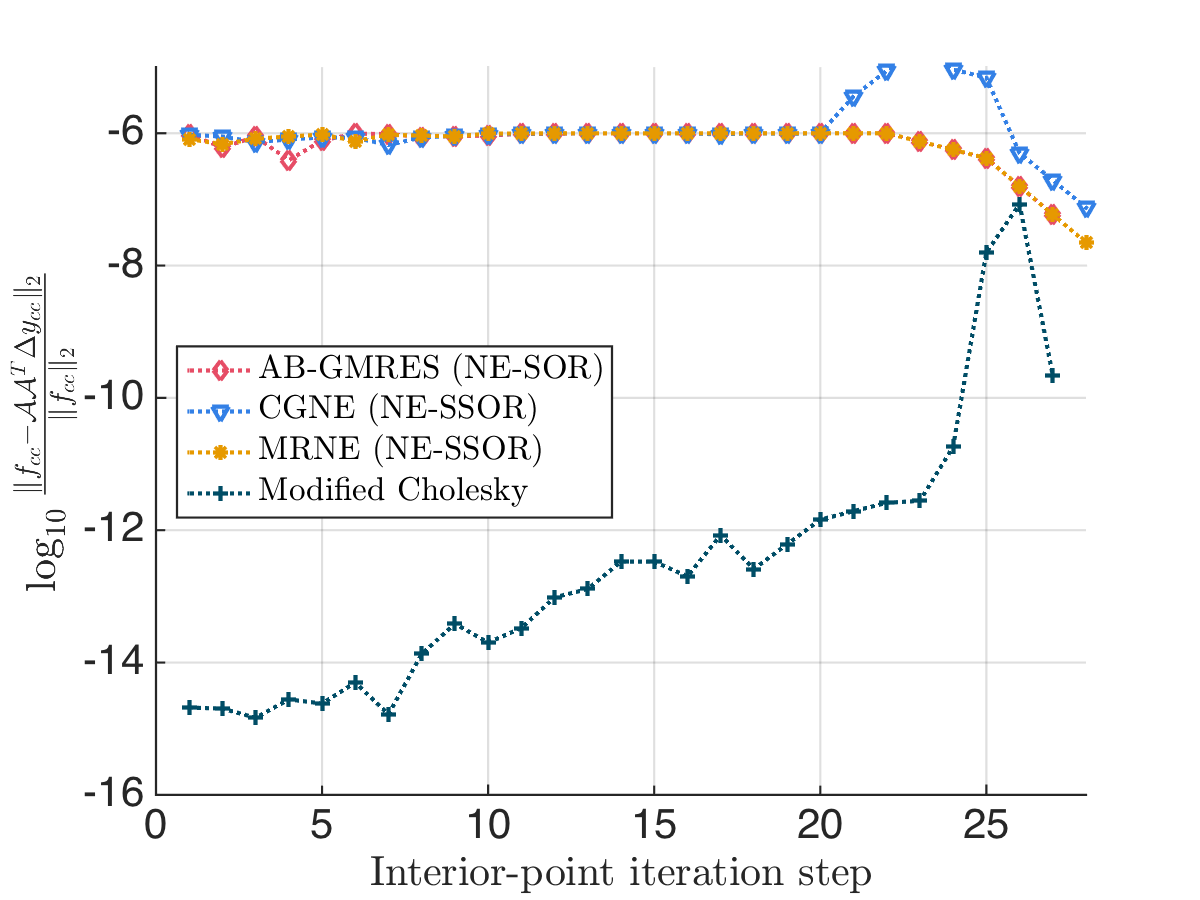}\label{fig:resmid2}}\\
    \subfloat[CPU time for each interior-point step.]{\includegraphics[width=.47\textwidth]{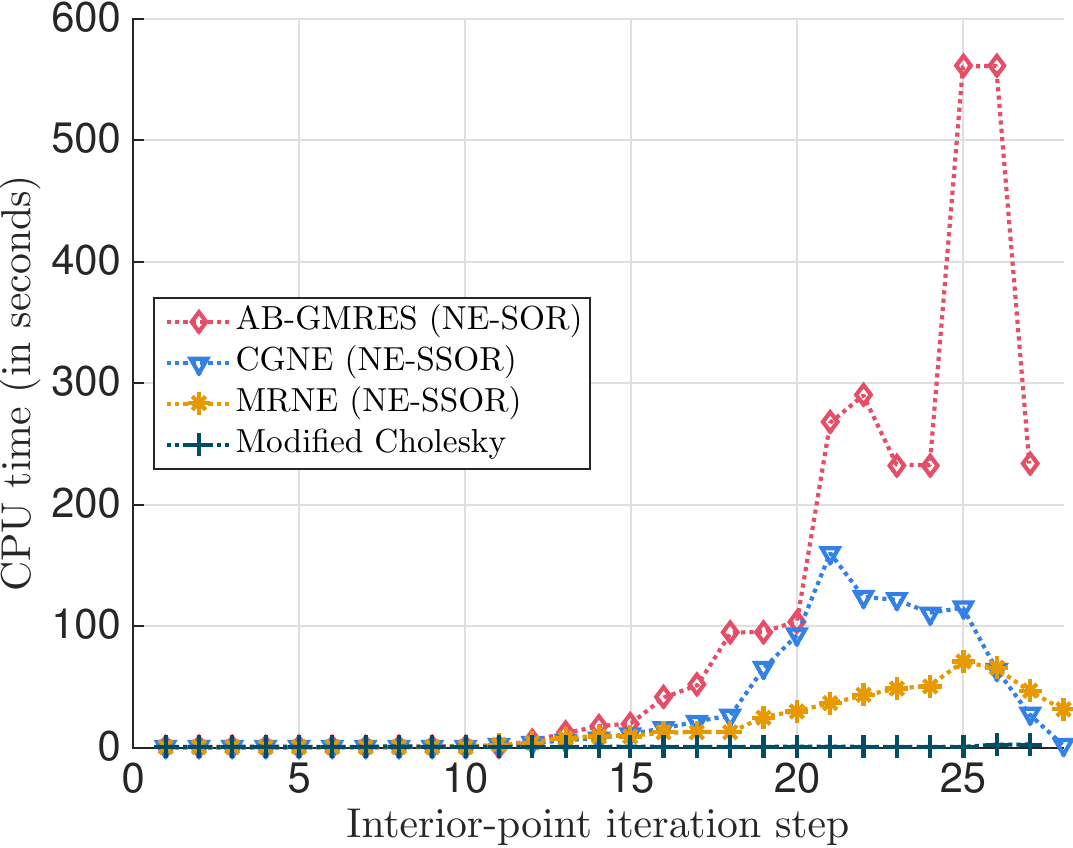}\label{fig:cputime}}\hfill
    \subfloat[Krylov iteration for each interior-point step.]{\includegraphics[width=.47\textwidth]{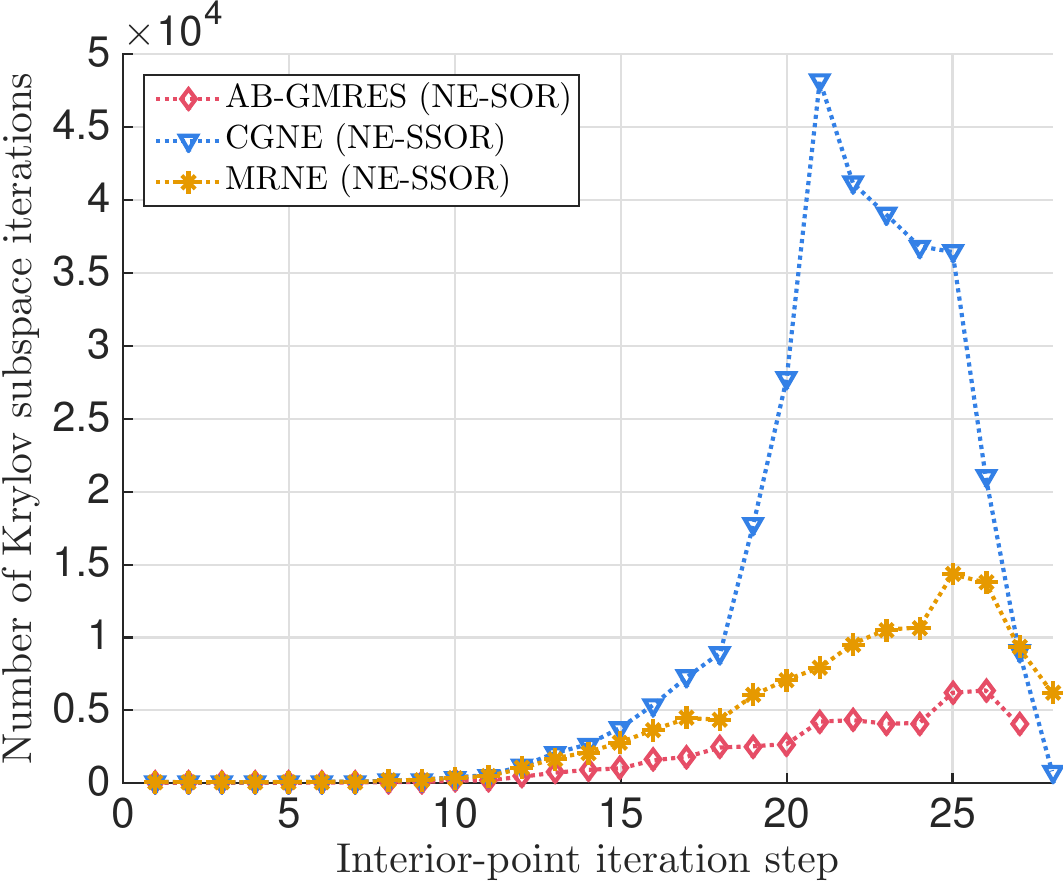}\label{fig:midit}}
    \caption{Numerical results for problem ken\_\!\! 13.\label{fig:ken13}}
\end{figure}

Figures \ref{fig:resmid1} and \ref{fig:resmid2} show the relative residual norm $\|\boldsymbol{f}_\mathrm{af} - \mathcal{A} \mathcal{A}^\mathsf{T} \Delta \boldsymbol{y}_\mathrm{af} \|_2 / \|\boldsymbol{f}_\mathrm{af}\|_2$ in the predictor stage and $\| \boldsymbol{f}_\mathrm{cc} - \mathcal{A} \mathcal{A}^\mathsf{T} \Delta \boldsymbol{y}_\mathrm{cc} \|_2/ \|\boldsymbol{f}_\mathrm{cc}\|_2$ in the corrector stage, respectively. 
The quantities are in $\log_{10}$ scale. 
The relative residual norm for modified Cholesky tended to increase with the interior-point iterations
and sharply increased in the final phase when it lost accuracy in solving the normal equations for the steps. 
We observed similar trends for other test problems and, in the worst cases, the inaccuracy in the solutions prevented interior-point convergence.
Among the iterative solvers, AB-GMRES (NE-SOR) and MRNE (NE-SSOR) were the most stable in keeping the accuracy of solutions to the normal equations; CGNE (NE-SSOR) performed similarly but lost numerical accuracy at the last few interior-point steps.

Figures \ref{fig:cputime} and \ref{fig:midit} show the CPU time and number of iterations of the Krylov methods for each interior-point step, respectively. 
It was observed that the CPU time of the modified Cholesky solver was more evenly distributed in the whole process while that of the iterative solvers tended to be less in the beginning and ending phases. 
At the final stage, AB-GMRES (NE-SOR) required the fewest number of iterations but cost much more CPU time than the other two iterative solvers. 
This can be explained as follows: AB-GMRES (NE-SOR) requires increasingly more CPU time and memory with the number of iterations because it has to store the orthonormal vectors in the modified Gram-Schmidt process as well as the Hessenberg matrix.
In contrast, CGNE (NE-SSOR) and MRNE (NE-SSOR) based methods require constant memory. 
CGNE (NE-SSOR) took more iterations and CPU time than MRNE (NE-SSOR).
Other than $\mathcal{A}$ and the preconditioner, the memory required for $k$ iterations of  AB-GMRES is 
$\mathcal{O}(k^2 + km + n)$ 
and that for CGNE and MRNE iterations is $\mathcal{O}(m+n)$
\cite{HayamiYinIto2010,MorikuniHayami2015}.
This explains why AB-GMRES (NE-SOR), although requiring fewer iterations, usually takes longer to obtain the solution at each interior-point step. We also did experiments on restarting AB-GMRES for a few problems. However, the performance was not competitive compared to the non-restarted version.

On the other hand, the motivation for using AB-GMRES (NE-SOR) is that
GMRES is more robust for ill-conditioned problems than the symmetric solvers CG and MINRES. This is because GMRES uses a modified Gram-Schmidt process to orthogonalize the vectors explicitly; CG and MINRES rely on short recurrences, where orthogonality of vectors may be lost due to rounding error.
Moreover, GMRES allows using non-symmetric preconditioning while the symmetric solvers require symmetric preconditioning. 
For example, using SOR preconditioner is cheaper than SSOR for one iteration because the latter goes forwards and backwards. SOR requires 2MV + 3m operations per inner iteration, while SSOR requires 4MV + 6m.  
In this sense, the GMRES method has more freedom for choosing preconditioners.

From Figure \ref{fig:ken13}, we may draw a few conclusions.
For most problems, the direct solver gave the most efficient result in terms of CPU time. 
However, for some problems, the direct solver tended to lose accuracy as interior-point iterations proceeded and, in the worst cases, this would inhibit convergence.
For problems where the direct method broke down, the proposed inner-iteration preconditioned Krylov subspace methods worked until convergence. 
With the iterative solvers, it is acceptable to solve \eqref{eq:SE3_predictor} and \eqref{eq:SE3_corrector} to a moderate level of accuracy in the early phase of the interior-point iterations, and then increase the level of accuracy in the late phase.

\FloatBarrier
\subsection{Basis pursuit problems}\label{sec:bp_lp}
Most of the problems tested in the last section have a sparse constraint matrix~$A$. The average nonzero density is $2.55\%$,  $0.62\%$, and $0.45\%$ for the problems in \textsc{Netlib}, \textsc{Qaplib}, and \textsc{Mittelmann}, respectively.
However, the matrix can be large and dense for problems such as QP in support vector machine training and linear programming in basis pursuit \cite{ChenDonohoSaunders1998}. 
The package Atomizer \cite{chen1995atomizer} gives such matrices.

In this section, we enrich the experiment by adding problems arising from basis pursuit \cite{ChenDonohoSaunders1998}.
We reproduced the $\ell_1$-norm optimization problems from the package Atomizer \cite{chen1995atomizer}, and reformulated them in the standard form of linear programming. The connection between basis pursuit and LP can be found therein.
The problems tested in this section have constraint matrices with average nonzero density $48.33\%$ and are usually very well-conditioned, with condition number in the range of $(1, 18.54]$.
The results are shown in Table \ref{tab:bp}.

\begin{table}[!bt]
\footnotesize
\renewcommand{\arraystretch}{1.50}
\tabcolsep=0.77pt
\caption{Experiments on basis pursuit problems.}
\begin{center}\footnotesize
\begin{tabular}{@{}rrrrrrrrrrrrrrrrrrrrr@{}}
 \hline 
& && \multicolumn{2}{c}{AB-GMRES}  &  \multicolumn{2}{c}{CGNE} &  \multicolumn{2}{c}{MRNE} & \multicolumn{2}{c}{MOSEK} & \multicolumn{2}{c}{PDCO}& \multicolumn{2}{c}{PDCO}\\ 

& && \multicolumn{2}{c}{(NE-SOR)}  & \multicolumn{2}{c}{(NE-SSOR)}  & \multicolumn{2}{c}{(NE-SSOR)} & \multicolumn{2}{c}{} & \multicolumn{2}{c}{Direct} & \multicolumn{2}{c}{LSMR}\\

Problem & $m$ & $n$  &  Iter  & Time   & Iter & Time & Iter & Time & Iter & Time     & Iter & Time & Iter & Time     \\ \hline
% name    | m     | ABGMRES          | CGNE             | MRNE                   | Cholesky
%                 | SDPT3            | SeDuMi           | MOSEK
% ------------------------------------------------------------------------------------------------------------------------  
bpfig22  & 512 & 10,240 & 8 & 141.24 & 9 & 121.68 & 9  &117.46& 6 & 686.02& \color{blue}\underline{14}& \color{blue}\underline{33.19}& \bf\color{red}14& \bf\color{red}30.50        \\ 
bpfig23  & 256 & 4,608 & \color{blue}\underline{7} & \color{blue}\underline{5.40} & 7 &10.86 & 7  &9.44&  5& 120.41& \bf\color{red}9& \bf\color{red}3.44& 79& 629.77        \\ 
bpfig24  & 256 & 2,048 & 24 &124.83  & \dag$_{-2}$ &\dag & \dag$_{-7}$  &\dag& \color{blue}\underline{18} & \color{blue}\underline{39.85} & \bf\color{red}28& \bf\color{red}13.03& 28& 42.20      \\ 
bpfig26  &1,024&14,336 & 19 & 3,138.24 &\dag$_{-4}$  &\dag & \dag$_{-5}$  &\dag& \color{blue}\underline{16} & \color{blue}\underline{1,731.40}& \bf\color{red}35& \bf\color{red}222.88& 87&14,226.49         \\ 
bpfig31  & 512 & 10,240 & 8 & 136.35 & 9 & 112.73 & 9 & 118.93  & 6 & 632.10 & \color{blue}\underline{14}& \color{blue}\underline{35.95}& \bf\color{red}14& \bf\color{red}31.61        \\ 
bpfig32  &1,024&14,336 & 20 & 2,016.11 & \dag$_{-4}$ &\dag &   \dag$_{-4}$ &\dag& \color{blue}\underline{20} & \color{blue}\underline{1,162.40}& \bf\color{red}40& \bf\color{red}227.77& \texttt{f} & \texttt{f}         \\ 
bpfig33  &1,024&22,528 & 23 & 2,507.16 & \dag$_{-5}$ &\dag &  \dag$_{-5}$ &\dag& \color{blue}\underline{26} &\color{blue}\underline{1,846.84} & \bf\color{red}41& \bf\color{red}231.69& \texttt{f} & \texttt{f}         \\ 
bpfig34  &1,024&14,336 & 20 & 1,876.94 & \dag$_{-4}$ & \dag& \dag$_{-4}$ & \dag& \color{blue}\underline{20} & \color{blue}\underline{1,174.63} & \bf\color{red}40& \bf\color{red}250.12& \texttt{f} & \texttt{f}         \\ 
bpfig41  & 256 & 4,096 & 20 & 391.63 &\dag$_{-4}$  &\dag &\dag$_{-4}$  &\dag& \color{blue}\underline{24} & \color{blue}\underline{121.53}& \bf\color{red}32& \bf\color{red}11.63&\texttt{f} &\texttt{f}         \\ 
bpfig51  &1,024&22,528 & \color{blue}\underline{20} & \color{blue}\underline{1,048.55} & \dag$_{-5}$ &\dag & \dag$_{-5}$  &\dag& 16 &1,741.09 & \bf\color{red}35& \bf\color{red}219.62& 35& 2,969.34        \\ 
bpfig52  & 256 & 2,048 & 16 & 77.93 & \dag$_{-3}$ &\dag & \dag$_{-2}$  &\dag& \color{blue}\underline{13} & \color{blue}\underline{38.39} & \bf\color{red}28& \bf\color{red}9.06&28 &105.90         \\ 
bpfig53  &1,024&4,096  & 24 & 1,447.58 &\dag$_{-4}$  &\dag &\dag$_{-4}$   &\dag & \color{blue}\underline{21} &\color{blue}\underline{156.18} & \bf\color{red}41& \bf\color{red}65.68& \texttt{f} & \texttt{f}  \\\ 
bpfig54  &1,024&4,096  & \color{blue}\underline{22} & \color{blue}\underline{1,830.62} &\dag$_{-5}$  &\dag &\dag$_{-6}$  &\dag & \bf\color{red}28& \bf\color{red}168.20&\dag$_{-6}$ & \dag&\texttt{f}&\texttt{f}\\ 
\hline
\end{tabular}\label{tab:bp}
\end{center}
\end{table}
The notations have the same meaning as explained in the previous section.
Although PDCO's direct solver may be fast for the problems in Table \ref{tab:bp}, if the problems are given without explicit constraint matrices, 
one has to use the iterative solver (e.g., LSMR) version.
The result shows that only AB-GMRES (NE-SOR) and MOSEK succeeded in solving all the problems. Among these two methods, AB-GMRES (NE-SOR) was faster than MOSEK for the problems bpfig22, bpfig23, bpfig31, and bpfig51.

\FloatBarrier
\section{Conclusions}\label{sec:lp_conclusion}
We proposed a new way of preconditioning the normal equations of the second kind arising within interior-point methods for LP problems \eqref{eq:minnrmprob}.
The resulting interior-point solver is composed of three nested iteration schemes. 
The outer-most layer is the predictor-corrector interior-point method; the middle layer is the Krylov subspace method for least squares problems, where we may use AB-GMRES, CGNE or MRNE; on top of that, we use a row-scaling scheme that does not incur extra CPU time but helps improving the condition of the system at each interior-point step; the inner-most layer, serving as a preconditioner for the middle layer, is the stationary inner iterations. 
Among the three layers, only the outer-most one runs towards the required accuracy and the other two are terminated prematurely. The linear systems are solved with a gradually tightened stopping tolerance. We also proposed a new recurrence regarding $\Delta\boldsymbol{w}$ in place of $\Delta\boldsymbol{y}$ to omit one matrix-vector product at each interior-point step. We showed that the use of inner-iteration preconditioners in combination with these techniques enables the efficient interior-point solution of wide-ranging LP problems. We also presented a fairly extensive benchmark test for several renowned solvers including direct and iterative solvers.

The advantage of our method is that it does not break down, even when the matrices become ill-conditioned or (nearly) singular.
The method is competitive for large and sparse problems and may also be well-suited to problems in which matrices are too large and dense for direct approaches to work. 
Extensive numerical experiments showed that 
our method outperforms the open-source solvers SDPT3, SeDuMi, and PDCO regarding stability and efficiency.

There are several aspects of our method that could be improved. The current implementation of the interior-point method does not use a preprocessing step except for eliminating empty rows and columns. Its efficiency may be improved by adopting some existing preprocessing procedure such as presolve to detect and remove linear dependencies of rows and columns in the constraint matrix. Also, the proposed method could be used in conjunction with more advanced interior-point frameworks such as the multi-corrector interior-point method.
In terms of the linear solver, future work is to try reorthogonalization for CG and MINRES and the Householder orthogonalization for GMRES.
It is also important to develop preconditioners that only require the action of the operator on a vector, as in huge basis pursuit problems.

It would also be worthwhile to extend our method to problems such as convex QP and SDP.

\paragraph{Acknowledgements}
We would like to thank the editor and referees for their valuable comments.

%\newpage
\bibliographystyle{siamplain}
\bibliography{ref}
\end{document}